\begin{document}

\let\kappa=\varkappa
\let\epsilon=\varepsilon
\let\phi=\varphi
\let\p\partial
\let\lle=\preccurlyeq
\let\ulle=\curlyeqprec

\def\Z{\mathbb Z}
\def\R{\mathbb R}
\def\C{\mathbb C}
\def\Q{\mathbb Q}
\def\P{\mathbb P}
\def\HH{\mathsf{H}}
\def\XX{\mathcal X}

\def\conj{\overline}
\def\Beta{\mathrm{B}}
\def\const{\mathrm{const}}
\def\ov{\overline}
\def\wt{\widetilde}
\def\wh{\widehat}

\renewcommand{\Im}{\mathop{\mathrm{Im}}\nolimits}
\renewcommand{\Re}{\mathop{\mathrm{Re}}\nolimits}
\newcommand{\codim}{\mathop{\mathrm{codim}}\nolimits}
\newcommand{\Aut}{\mathop{\mathrm{Aut}}\nolimits}
\newcommand{\lk}{\mathop{\mathrm{lk}}\nolimits}
\newcommand{\sign}{\mathop{\mathrm{sign}}\nolimits}
\newcommand{\rk}{\mathop{\mathrm{rk}}\nolimits}

\def\id{\mathrm{id}}
\def\Leg{\mathrm{Leg}}
\def\Jet{{\mathcal J}}
\def\sS{{\mathcal S}}
\def\lcan{\lambda_{\mathrm{can}}}
\def\ocan{\omega_{\mathrm{can}}}

\renewcommand{\mod}{\mathrel{\mathrm{mod}}}

\newtheorem{mainthm}{Theorem}
\renewcommand{\themainthm}{{\Alph{mainthm}}}
\newtheorem{thm}{Theorem}[section]
\newtheorem{lem}[thm]{Lemma}
\newtheorem{prop}[thm]{Proposition}
\newtheorem{cor}[thm]{Corollary}

\theoremstyle{definition}
\newtheorem{exm}[thm]{Example}
\newtheorem{rem}[thm]{Remark}
\newtheorem{df}[thm]{Definition}

\numberwithin{equation}{section}

\title{Universal orderability of Legendrian isotopy classes}
\author[Chernov \& Nemirovski]{Vladimir Chernov and Stefan Nemirovski}
\thanks{This work was partially supported by a grant from the Simons Foundation (\#{}235674 to Vladimir Chernov).
The second author was partially supported by SFB/TR~12 of the DFG and RFBR grant \textnumero 13-01-12417-ofi-m}
\address{Department of Mathematics, 6188 Kemeny Hall,
Dartmouth College, Hanover, NH 03755-3551, USA}
\email{Vladimir.Chernov@dartmouth.edu}
\address{%
Steklov Mathematical Institute, Gubkina 8, 119991 Moscow, Russia;\hfill\break
\strut\hspace{8 true pt} Mathematisches Institut, Ruhr-Universit\"at Bochum, 44780 Bochum, Germany}
\email{stefan@mi.ras.ru}

\begin{abstract}
It is shown that non-negative Legendrian isotopy defines a partial
order on the universal cover of the Legendrian isotopy class of
the fibre of the spherical cotangent bundle of any manifold.
This result is applied to Lorentz geometry in the spirit of
the authors' earlier work on the Legendrian Low conjecture.
\end{abstract}

\maketitle

\section{Introduction}

\subsection{Partial orders in contact geometry}
Let $(X,\ker\alpha)$ be a contact manifold with a co-oriented contact structure.
A contact or Legendrian isotopy in~$X$ is called non-negative if individual points
move in the direction of the co-orientation of the contact hyperplanes
(formal definitions are given in~\S\ref{NonNegLeg&Cont}).

Let $\mathcal C$ denote either a connected component of the contactomorphism group of $X$
or a Legendrian isotopy class in~$X$.
We write $a\lle b$ for two elements $a,b\in\mathcal C$ if there is a non-negative isotopy
connecting $a$ to~$b$. This partial relation has a natural lift
to a partial relation $\ulle$ on the universal cover~$\widetilde{\mathcal C}$,
see \S\ref{PartOrders} for details.
It is clear that $\lle$ and $\ulle$ are reflexive and transitive.
Let us call $\mathcal C$ {\it orderable\/} if $\lle$ is also
antisymmetric (i.e.\ defines a partial order on $\mathcal C$) and {\it universally
orderable\/} if $\ulle$ defines a partial order on~$\widetilde{\mathcal C}$.

The question of (universal) orderability for groups of contactomorphisms
and Legendrian isotopy classes was apparently first raised by
Eliashberg and Polterovich~\cite{ElPo} and Bhupal~\cite{Bh}.
There are now several papers treating various situations
\cite{ElKiPo}, \cite{CFP}, \cite{ChNe1}, \cite{ChNe2}, \cite{Sa}, \cite{AlFr}, \cite{Za}, \cite{AlMe}, \cite{BoZa}.
For reasons that will be explained in~\S\ref{c&o},
we are particularly interested in the case,
first considered by Colin, Ferrand and Pushkar' in~\cite{CFP}, when $\mathcal C=\Leg(ST^*_xM)$
is the Legendrian isotopy class of the fibre of the spherical cotangent bundle $ST^*M$
of a manifold~$M$, $\dim M\ge 2$.
This class is orderable if the universal cover of $M$ is non-compact by~\cite[Remark~8.2]{ChNe2}
or does not have the integral cohomology ring of a compact rank one symmetric space (CROSS)
by~\cite[Theorem~1.13]{FrLaSch} and Proposition~\ref{Order}.
On the other hand, $\Leg(ST^*_xM)$ is not orderable for every CROSS and,
more generally, for any manifold $M$ admitting a Riemannian $Y^x_\ell$-metric,
see~\cite[Example~8.3]{ChNe2}. The following special case of Theorem~\ref{UnivOrder}
shows that {\it universal\/} orderability holds for every~$M$.

\begin{thm}
\label{uorder}
The Legendrian isotopy class of the fibre of $ST^*M$ is
universally orderable.
\end{thm}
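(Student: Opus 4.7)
The plan is to prove antisymmetry of $\ulle$ by reducing it to causality in a globally hyperbolic Lorentz manifold, extending the authors' approach to the Legendrian Low conjecture to the universal cover.

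First I would equip $M$ with a complete Riemannian metric $g$ and form the Lorentzian product $(X,h)=(M\times\mathbb R,\,-dt^2+g)$. The space of future-oriented unparameterised null geodesics of $X$ is naturally contactomorphic to $ST^*M$, and under this identification the \emph{sky} of an event $p=(x,t)$ (the Legendrian sphere of light rays through $p$) is the Reeb-time-$t$ translate of the fibre $ST^*_xM$. Passing to the universal cover gives $\widetilde X=\widetilde M\times\mathbb R$, which is globally hyperbolic because $\widetilde M$ with the lifted Riemannian metric is complete; hence its causal relation $\le$ is a bona fide partial order.

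The central step is to lift the sky map to a map $\Sigma:\widetilde X\to\widetilde{\mathcal C}$ and verify that $\Sigma(\widetilde p)\ulle\Sigma(\widetilde q)$ if and only if $\widetilde p\le\widetilde q$. The easy direction is that a causal curve in $\widetilde X$ induces a non-negative isotopy of skies; the hard direction is the universal-cover version of the Legendrian Low property the authors established earlier. Granted this correspondence, antisymmetry of $\le$ on $\widetilde X$ gives antisymmetry of $\ulle$ on the image $\Sigma(\widetilde X)$. To extend the conclusion to all of $\widetilde{\mathcal C}$, I would reformulate the problem as the statement that every contractible non-negative loop in $\mathcal C$ based at $L_0 = ST^*_xM$ is constant, and then use contact squeezing in a tubular neighbourhood of $L_0$, possibly combined with a small Reeb push, to convert such a loop into a sky-valued loop; a sky-valued non-negative loop traces a closed causal curve in $\widetilde X$, which must be trivial by global hyperbolicity.

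The main obstacle is exactly this last reduction: turning an arbitrary non-negative Legendrian loop in the fibre class into a loop of genuine skies without destroying non-negativity or contractibility. This is the pressure point that separates plain orderability -- which fails for CROSSes such as $S^n$, because the geodesic flow identifies the fibres over antipodal points after a finite non-negative isotopy -- from universal orderability, where the corresponding loop is not contractible in $\mathcal C$ and its lift in $\widetilde{\mathcal C}$ does not close up, so no contradiction with antisymmetry arises.
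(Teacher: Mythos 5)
Your plan has a genuine circularity at its core. The ``hard direction'' of the correspondence you propose --- that $\Sigma(\widetilde p)\ulle\Sigma(\widetilde q)$ forces $\widetilde p\le\widetilde q$ in $\widetilde X=\widetilde M\times\R$ --- is not an available input: in the paper this Legendrian--Low-type implication (Proposition~\ref{o2c} and its lift to universal covers) is \emph{deduced from} orderability, resp.\ from Theorem~\ref{uorder} itself, not proved independently. The same problem resurfaces in your last step: it is false in general that a sky-valued non-negative loop traces a closed causal curve in $\widetilde X$. Take $M=S^n$ with the round metric, so $\widetilde X=S^n\times\R$; the points $(x,t)$ and $(-x,t+\pi)$ have the same sky, and the non-negative loop of skies obtained from the geodesic flow corresponds to a \emph{non-closed} causal curve from $(x,t)$ to $(x,t+2\pi)$. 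Deciding that a \emph{contractible} such loop cannot occur is exactly the universal-cover information you are trying to prove, so global hyperbolicity of $\widetilde X$ cannot be invoked to settle it. Finally, the reduction you flag as the ``main obstacle'' --- converting an arbitrary contractible non-negative loop in the fibre class into a loop of genuine skies while preserving non-negativity and contractibility --- is not a technical pressure point but the entire content of the theorem; no squeezing/Reeb-push argument is offered that does this, and any such argument that ignored contractibility would prove plain orderability for $S^n$, which is known to fail.

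The paper's route is entirely different and does not pass through Lorentz geometry at all (the causal applications are downstream of the theorem, not upstream). By Proposition~\ref{Order} it suffices to rule out contractible \emph{positive} Legendrian loops in the class of the fibre. Such a loop is a contractible transverse family over $S^1$; its Legendrian suspension sits in the $S^1$-stabilisation $ST^*(M\times S^1)-\pi_{S^1}^*(ST^*S^1)$ and avoids $\pi_M^*(ST^*M)$, while the suspension of the constant family is the pull-back $\pi_M^*(L_f)$, where the fibre is first isotoped to a hypersurface-generated Legendrian $L_f$ (Example~\ref{Conormal}). Theorem~\ref{NonDispl} --- proved with quadratic-at-infinity generating hypersurfaces (Theorem~\ref{GenHyp}), the relative homotopy lifting property, and a $\Z/2$ Poincar\'e-duality degree argument --- shows that anything Legendrian isotopic to $\pi_M^*(L_f)$ must meet $\pi_M^*(ST^*M)$, giving the contradiction. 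If you want to pursue your causal picture, you would first need an independent proof of the lifted Low-type implication, which currently does not exist without the very theorem at stake.
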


This theorem is inspired by and generalises the result
of Eliashberg, Kim, and Polterovich \cite[Theorem 1.18]{ElKiPo}
about the contactomorphism group of $ST^*M$.
They proved (modulo an assumption removed in~\cite{ChNe2})
that the identity component $\mathrm{Cont}_0(ST^*M)$
is {\it universally\/} orderable for every closed manifold~$M$.
However, $\mathrm{Cont}_0(ST^*M)$ is not orderable
for  any manifold~$M$
admitting a Riemannian metric with periodic geodesic flow.

\subsection{Causality and orderability}
\label{c&o}
Let $(\XX,g)$ be a spacetime, that is, a time-oriented
connected Lorentz manifold. Assume that $g$ has
signature $(+,-,\dots,-)$ with at least two negative spacelike
directions. A piecewise smooth curve in $\XX$ is called future-directed
(f.d.) if its tangent vector at each point lies in the future
hemicone defined by the time-orientation in the non-spacelike cone
of the Lorentz metric.

The {\it causality relation\/} $\le$  on $\XX$ is defined
by setting $x\le y$ if either $x=y$ or there is a f.d.\ curve
connecting $x$ to~$y$. This relation is always reflexive
and transitive. If it is a partial order, the spacetime is
said to be {\it causal}.

A causal spacetime is {\it globally hyperbolic\/} if all
causal segments $I_{x,y}=\{z\in\XX\mid x\le z\le y\}$
are compact~\cite{BeSa2}. By another result of Bernal and S\'anchez~\cite{BeSa1},
a spacetime is globally hyperbolic if and only if it
contains a smooth spacelike hypersurface $M\subset\XX$
such that every endless f.d.\ curve meets $M$ exactly once.
Such an $M$ is called a {\it Cauchy surface\/} in~$\XX$.

Let $\mathfrak N$ be the set of all f.d.\ non-parametrised null
geodesics (i.e.\ light rays) in $(\XX,g)$.
$\mathfrak N$~has a canonical structure of a contact manifold,
see~\cite[\S 2]{Lo2} or~\cite[pp.\,252--253]{NaTo}.
The set of all null geodesics passing through
a point $x\in\XX$ is a Legendrian sphere $\mathfrak S_x\subset\mathfrak N$
called the {\it sky\/} (or the {\it celestial sphere\/}) of that point.
The association $x\mapsto {\mathfrak S}_x$ was one of the starting
points of Penrose's twistor theory, see e.g.~\cite{Pe}; its study in the context of
contact geometry was initiated by Low~\cite{Lo1}.

There is a contactomorphism
$$
\rho_M:\mathfrak N\overset{\cong}{\longrightarrow} ST^*M
$$
taking a null geodesic $\gamma\in\mathfrak N$ to the equivalence class of
the $1$-form $g(\dot\gamma,\cdot)$ on $T_{\gamma\cap M}M$,
where $\dot\gamma$ is a f.d.\ tangent vector to $\gamma$ at~$\gamma\cap M$.
For every sky, its image $\rho_M({\mathfrak S}_x)$ in $ST^*M$
is Legendrian isotopic to the fibre of $ST^*M$. Hence, we obtain a map
$$
\mathfrak s:\XX \longrightarrow \mathrm{Leg}(ST_{\{\mathrm{pt}\}}^*M)
$$
from a globally hyperbolic spacetime $\XX$ to the Legendrian isotopy class
of the fibre of the spherical cotangent bundle of its Cauchy surface~$M$.

The following key observation is an immediate corollary of the proof
of~\cite[Proposition~4.2]{ChNe1} taking into account the
opposite convention for the signature of the Lorentz metric.

\begin{prop}
\label{c2o}
$x\le y \Longrightarrow \mathfrak s(x)\lle\mathfrak s(y)$.
\end{prop}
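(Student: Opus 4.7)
The plan is to construct an explicit non-negative Legendrian isotopy from $\mathfrak s(x)$ to $\mathfrak s(y)$ by moving the base point of the sky in $\XX$ along a future-directed causal curve and tracking the resulting family of skies in $\mathfrak N$.

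If $x=y$ the claim is trivial by reflexivity of $\lle$. Otherwise, fix a piecewise smooth future-directed causal curve $c\colon [0,1]\to\XX$ with $c(0)=x$ and $c(1)=y$, smooth on each of finitely many subintervals. For each $t$ the sky $\mathfrak S_{c(t)}\subset\mathfrak N$ is a Legendrian sphere, and the family $t\mapsto\mathfrak S_{c(t)}$ is a (piecewise smooth) Legendrian isotopy; applying $\rho_M$ yields a Legendrian isotopy $L_t:=\rho_M(\mathfrak S_{c(t)})\subset ST^*M$ from $L_0=\mathfrak s(x)$ to $L_1=\mathfrak s(y)$, with each $L_t$ lying in $\Leg(ST^*_{\{\mathrm{pt}\}}M)$ as recorded just before the proposition. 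By transitivity of $\lle$ it suffices to handle one smooth piece at a time.

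The only substantive step is to check that this isotopy is non-negative. At a null geodesic $\sigma\in\mathfrak S_{c(t)}$, the velocity of the family is represented, in the standard description of $T_\sigma\mathfrak N$ as Jacobi fields along $\sigma$ modulo the reparametrisation direction, by a Jacobi field $J$ with $J(s_0)=\dot c(t)$, where $s_0$ is the parameter value at which $\sigma$ passes through $c(t)$. The natural contact form on $\mathfrak N$ (which pulls back to a positive multiple of $\lcan$ on $ST^*M$ via $\rho_M^{-1}$) evaluates on such a Jacobi field as $g(\dot\sigma(s_0),J(s_0))=g(\dot\sigma(s_0),\dot c(t))$. Since $\dot\sigma$ is future-directed null and $\dot c(t)$ is future-directed causal, in signature $(+,-,\dots,-)$ this pairing satisfies $g(\dot\sigma(s_0),\dot c(t))\ge 0$. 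Hence $\{L_t\}$ is non-negative in the sense of \S\ref{NonNegLeg&Cont}, and so $\mathfrak s(x)\lle\mathfrak s(y)$.

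The only place where any real work is hidden is the identification of the contact form on $\mathfrak N$ with the Lorentz pairing $g(\dot\sigma,\cdot)$ at the base point; this is the computation where the opposite signature convention flips the overall sign relative to \cite{ChNe1}, and is precisely what needs to be reread from the proof of \cite[Proposition~4.2]{ChNe1}. Once this identification is in place, the proposition reduces to the elementary inequality $g(u,v)\ge 0$ for future-directed null $u$ and future-directed causal $v$ in a $(+,-,\dots,-)$ Lorentz vector space, which is why the authors describe the conclusion as an immediate corollary.
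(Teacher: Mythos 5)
Your argument is correct and is essentially the paper's own proof: the paper simply refers to the proof of \cite[Proposition~4.2]{ChNe1}, which is exactly this construction — move the vertex along a future-directed causal curve, observe that the skies trace out a Legendrian isotopy, and evaluate the contact form on the velocity Jacobi field as the Lorentz pairing $g(\dot\sigma,\dot c)\ge 0$ (with the sign fixed by the $(+,-,\dots,-)$ convention and the definition of $\rho_M$). Your handling of the reparametrisation ambiguity and of piecewise smoothness via transitivity of $\lle$ matches the intended argument.
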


The converse implication does not hold in general.
(For example, the map $\mathfrak s$ may be non-injective,
cf.~\cite[Example~10.5]{ChNe2}.)
However, a useful sufficient condition for it to hold
can be formulated purely in terms of~$ST^*M$.
This condition was an implicit underpinning
of our work on the (Legendrian) Low conjecture in~\cite{ChNe1,ChNe2}.

\begin{prop}
\label{o2c}
If\/ $\mathrm{Leg}(ST_{\{\mathrm{pt}\}}^*M)$ is orderable,
then $\mathfrak s(x)\lle\mathfrak s(y)$ $\Longrightarrow$ $x\le y$.
\end{prop}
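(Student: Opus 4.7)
The plan is to argue by contradiction. Suppose $\mathfrak s(x)\lle\mathfrak s(y)$ but $x\not\le y$; since $\le$ is reflexive this forces $x\ne y$. I would split the argument according to whether $y$ is in the causal past of~$x$ or $x$ and $y$ are causally unrelated.

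Sub-case 1: $y\le x$. By Proposition~\ref{c2o}, $\mathfrak s(y)\lle\mathfrak s(x)$. Combining this with the hypothesis $\mathfrak s(x)\lle\mathfrak s(y)$ and invoking orderability of $\mathrm{Leg}(ST^*_{\{\mathrm{pt}\}}M)$ forces $\mathfrak s(x)=\mathfrak s(y)$. But the sky map $x\mapsto \mathfrak S_x$ is injective on any distinguishing spacetime---distinct events have distinct null cones and hence distinct collections of null geodesics through them---and globally hyperbolic spacetimes are distinguishing. This contradicts $x\ne y$.

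Sub-case 2: $x$ and $y$ are causally unrelated. Then $\{x,y\}$ is a closed acausal subset of the globally hyperbolic spacetime $\XX$, so by Bernal--S\'anchez-type extension theorems it is contained in some smooth spacelike Cauchy surface $M'\subset\XX$. The contactomorphism $\rho_{M'}\circ\rho_M^{-1}:ST^*M\to ST^*M'$ identifies the two fibre classes and transports the hypothesis to a non-negative Legendrian isotopy from $ST^*_xM'$ to $ST^*_yM'$ inside $ST^*M'$; the orderability hypothesis transfers as well, being an intrinsic property of the sky class in~$\mathfrak N$. Now pick a diffeomorphism $\phi:M'\to M'$ interchanging $x$ and $y$---this exists because $\mathrm{Diff}(M')$ acts doubly transitively on any connected manifold of dimension~$\ge 2$. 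The cotangent lift $\Phi:ST^*M'\to ST^*M'$ is a co-orientation-preserving contactomorphism which swaps the two fibres, so applying it to the given non-negative isotopy produces a non-negative Legendrian isotopy in the reverse direction. Orderability now forces $ST^*_xM'=ST^*_yM'$, again contradicting $x\ne y$.

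The main obstacle is Sub-case~2, where the crucial technical input is the existence of a common Cauchy surface through two causally unrelated events; once that is granted, the argument reduces to a symmetrization trick, since co-orientation-preserving contactomorphisms automatically preserve non-negativity and the diffeomorphism swapping $x$ and $y$ on $M'$ mechanically reverses the given isotopy.
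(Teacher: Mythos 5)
Your Sub-case 1 contains a genuine gap: the claim that the sky map is injective on any distinguishing (in particular, globally hyperbolic) spacetime is false. ``Distinguishing'' controls chronological pasts and futures; it says nothing about the global behaviour of null geodesics, and in a refocusing spacetime such as the Einstein cylinder $\bigl(S^n\times\R,\ dt^2-g_{S^n}\bigr)$ all null geodesics through one event pass again through a second, distinct, null-related event, so the two events have literally the same sky. The paper itself flags this, citing \cite[Example~10.5]{ChNe2} for non-injectivity of $\mathfrak s$. Thus from $\mathfrak s(x)=\mathfrak s(y)$, $x\ne y$, $y\le x$ you cannot derive a contradiction by injectivity; this is exactly the configuration that the paper must, and does, exclude by a separate argument using orderability itself: equality of skies forces $x$ and $y$ to be joined by null geodesics; assuming $y\le x$, one moves from $y$ slightly along a f.d.\ null geodesic to a point $z$ with a different sky and $y\le z\le x$, whence $\mathfrak s(y)\lle\mathfrak s(z)\lle\mathfrak s(x)=\mathfrak s(y)$ by Proposition~\ref{c2o}, and orderability forces $\mathfrak s(z)=\mathfrak s(y)$, a contradiction. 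Some such step is unavoidable, because the configuration ``same sky, $y\le x$, $x\ne y$'' genuinely occurs (e.g.\ when the Cauchy surface is a CROSS, which is precisely where the fibre class fails to be orderable), so only the orderability hypothesis can rule it out.

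Your Sub-case 2, by contrast, is essentially sound and takes a different route from the paper: the paper stays with the original Cauchy surface and uses \cite[Lemma~4.3]{ChNe1} (the links $\mathfrak s(x)\sqcup\mathfrak s(y)$ and $\mathfrak s(y)\sqcup\mathfrak s(x)$ are Legendrian isotopic through skies of causally unrelated pairs) together with the Legendrian isotopy extension theorem to produce a contactomorphism in $\mathrm{Cont}_0(ST^*M)$ swapping the two skies, whereas you invoke the Bernal--S\'anchez extension theorem to place $x$ and $y$ on a common spacelike Cauchy surface $M'$ and then swap the two fibres by the cotangent lift of a diffeomorphism of $M'$. The symmetrization mechanism (a co-orientation-preserving contactomorphism reverses the non-negative isotopy) is the same; your version buys a more elementary contact-geometric step at the cost of a nontrivial Lorentzian input and of checking that orderability of the fibre class transfers under $\rho_{M'}\circ\rho_M^{-1}$, and it neatly sidesteps non-injectivity in this sub-case because distinct points of $M'$ have distinct fibres. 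But without repairing Sub-case 1 along the lines above, the proof does not close.
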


\begin{proof}
If $y\le x$, then $\mathfrak s(y)\lle\mathfrak s(x)$
by Proposition~\ref{c2o} and hence $\mathfrak s(x)=\mathfrak s(y)$
by orderability. If $x$ and $y$ are not causally related,
then the Legendrian links $\mathfrak s(x)\sqcup \mathfrak s(y)$
and $\mathfrak s(y)\sqcup \mathfrak s(x)$ are Legendrian
isotopic (through links formed by skies of pairs of causally unrelated points)
by \cite[Lemma 4.3]{ChNe1}.
By the Legendrian isotopy extension theorem,
there exists a $\phi\in\mathrm{Cont}_0(ST^*M)$ such that
$\phi(\mathfrak s(x)\sqcup \mathfrak s(y))=\mathfrak s(y)\sqcup \mathfrak s(x)$.
It follows that $\mathfrak s(y)\lle\mathfrak s(x)$ and
again $\mathfrak s(x)=\mathfrak s(y)$ by orderability.

So we have to exclude the possibility that $x\ne y$
but $\mathfrak s(x)=\mathfrak s(y)$. In this case,
$x$ and $y$ are causally related by null geodesics.
Assume that $y\le x$ (otherwise we are done).
Moving from $y$ a bit along any f.d.\ null geodesic, we
obtain a point $z$ with a different sky and such
that $y\le z\le x$. Then $\mathfrak s(y)\lle\mathfrak s(z)\lle\mathfrak s(x)$
by Proposition~\ref{c2o} and hence $\mathfrak s(z)=\mathfrak s(x)=\mathfrak s(y)$
by orderability, which contradicts the choice of~$z$.
\end{proof}

It follows now from the orderability results cited above that the conclusion
of Proposition~\ref{o2c} holds for a globally hyperbolic spacetime such that
the universal cover of its Cauchy surface is either non-compact or does not
have the integral cohomology ring of a CROSS.
The remaining cases (e.g.~the case when $M$ is homotopy equivalent to a sphere)
may be handled using the universal orderability result of the present paper.

Firstly, note that we may pass to a simply connected globally hyperbolic
spacetime by considering the universal cover of $\XX$ with the pulled back
Lorentz metric, see~\cite[Theorem~14]{ChRu}. If $\XX$ is simply connected,
the map $\mathfrak s$ admits a lift
$$
\widetilde{\mathfrak s}:\XX\longrightarrow \widetilde{\mathrm{Leg}}(ST_{\{\mathrm{pt}\}}^*M).
$$
A careful inspection of the proofs of Propositions~\ref{c2o} and~\ref{o2c}
shows that they remain true with $\Leg$, ${\mathfrak s}$, and $\lle$
replaced by $\widetilde{\Leg}$, $\widetilde{\mathfrak s}$, and $\ulle$.
Since $\ulle$ is always a partial order by Theorem~\ref{uorder},
we obtain the following result.

\begin{thm}
Suppose that $\XX$ is a simply connected globally hyperbolic spacetime
with Cauchy surface~$M$. Then
$$
x\le y\mbox{ in }\XX\quad\Longleftrightarrow\quad
\widetilde{\mathfrak s}(x)\ulle\widetilde{\mathfrak s}(y)
\mbox{ in\/ } \widetilde{\mathrm{Leg}}(ST_{\{\mathrm{pt}\}}^*M)
$$
\end{thm}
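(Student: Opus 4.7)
The plan is to promote the proofs of Propositions~\ref{c2o} and~\ref{o2c} to the universal cover $\widetilde{\mathrm{Leg}}(ST^*_{\{\mathrm{pt}\}}M)$ and invoke Theorem~\ref{uorder} in place of orderability of $\mathrm{Leg}(ST^*_{\{\mathrm{pt}\}}M)$, which in general fails (e.g.\ when $M$ is a CROSS). Since $\mathcal X$ is simply connected, a choice of basepoint selects a unique continuous lift $\widetilde{\mathfrak s}$ of $\mathfrak s$. For the $\Rightarrow$ direction, given a future-directed curve from $x$ to $y$ in $\mathcal X$, the proof of Proposition~\ref{c2o} produces a non-negative Legendrian isotopy of skies whose image under $\widetilde{\mathfrak s}$ starts at $\widetilde{\mathfrak s}(x)$ and ends at $\widetilde{\mathfrak s}(y)$; this lifted path is non-negative by definition of $\ulle$, so $\widetilde{\mathfrak s}(x) \ulle \widetilde{\mathfrak s}(y)$.

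For the $\Leftarrow$ direction I would follow the case analysis of Proposition~\ref{o2c}, antisymmetry being supplied by Theorem~\ref{uorder}. Assume $\widetilde{\mathfrak s}(x) \ulle \widetilde{\mathfrak s}(y)$. If $y \le x$, the forward direction just established gives $\widetilde{\mathfrak s}(y) \ulle \widetilde{\mathfrak s}(x)$, and antisymmetry forces $\widetilde{\mathfrak s}(x) = \widetilde{\mathfrak s}(y)$. If $x$ and $y$ are causally unrelated, the swap contactomorphism $\phi \in \mathrm{Cont}_0(ST^*M)$ provided by \cite[Lemma~4.3]{ChNe1} transports the given non-negative lifted path into a non-negative path from $\widetilde{\mathfrak s}(y)$ to $\widetilde{\mathfrak s}(x)$ (note that $\phi$ preserves co-orientation because it lies in the identity component), so antisymmetry again forces equality. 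The residual case $\widetilde{\mathfrak s}(x) = \widetilde{\mathfrak s}(y)$ with $x \ne y$ is ruled out exactly as before: such points would be joined by a null geodesic, and moving from $y$ a small amount along it produces a point $z$ with $\mathfrak s(z) \ne \mathfrak s(y)$ and $y \le z \le x$; the forward direction then yields $\widetilde{\mathfrak s}(y) \ulle \widetilde{\mathfrak s}(z) \ulle \widetilde{\mathfrak s}(x) = \widetilde{\mathfrak s}(y)$, and antisymmetry forces $\widetilde{\mathfrak s}(z) = \widetilde{\mathfrak s}(y)$, contradicting the choice of $z$.

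The main subtlety is in the causally-unrelated case: the swap contactomorphism $\phi$ has to act on the universal cover in a way that genuinely interchanges $\widetilde{\mathfrak s}(x)$ and $\widetilde{\mathfrak s}(y)$, not merely their projections. Different choices of contact isotopy from $\mathrm{id}$ to $\phi$ produce lifts $\widetilde{\phi}$ that differ by deck transformations of $\widetilde{\mathrm{Leg}}$, so one must use the specific isotopy that realises the link swap through skies of causally unrelated pairs in \cite[Lemma~4.3]{ChNe1}; reading that isotopy through $\widetilde{\mathfrak s}$ pins down the correct lift and thereby identifies the endpoints as required. This is the ``careful inspection'' promised in the paragraph preceding the theorem, and it is the one step that goes beyond a mechanical rewrite of the arguments for Propositions~\ref{c2o} and~\ref{o2c}.
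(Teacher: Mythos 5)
Your proposal is correct and follows essentially the same route as the paper: the paper's proof consists precisely of lifting the arguments of Propositions~\ref{c2o} and~\ref{o2c} to $\widetilde{\mathrm{Leg}}(ST^*_{\{\mathrm{pt}\}}M)$ and replacing orderability by the universal orderability supplied by Theorem~\ref{uorder}. Your treatment of the causally unrelated case --- using that the swapping isotopy of \cite[Lemma~4.3]{ChNe1} runs through skies of points of $\XX$, so that reading it through $\widetilde{\mathfrak s}$ pins down the lift and its endpoints --- is exactly the ``careful inspection'' the paper leaves to the reader.
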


Thus, up to passing to a finite cover, the causality relation
(and hence, by~\cite[Theorem~2]{Ma}, the conformal Lorentz structure)
of a globally hyperbolic spacetime
is always determined by the map $x\mapsto\mathfrak S_x$
to the space of Legendrian spheres in its space of null geodesics.

\subsection*{Acknowledgments}
This paper owes very much to the seminal work of Eliashberg and Polterovich~\cite{ElPo}.
To a large extent, it implements their original strategy for proving \cite[Theorem~1.18]{ElKiPo},
cf.~Remark~\ref{shortcut}. The authors are also very grateful to Yuli Rudyak
for his valuable advice on the proof of Theorem~\ref{NonDispl}.

\section{A Legendrian non-displacement result}

\subsection{Generating hypersurfaces for Legendrian submanifolds in
spherical cotangent bundles (after Eliashberg and Gromov~\cite[\S 4.2]{ElGr})}
Let $L\subset ST^*M$ be a Legendrian submanifold in the spherical cotangent
bundle of a closed manifold~$M$.
Suppose that there exists a function $f:M\times\R^N\to\R$, $N\ge 0$, such that
\begin{itemize}
\item[1)] $0$ is not a critical value of $f$;
\item[2)] the hypersurface $\{f=0\}$ is in general position with respect to
the projection $\pi_M:M\times\R^N\to M$, that is to say, the subset
$$
FT_f:=\{x\in M\times\R^N\mid \{f=0\} \mbox{ is tangent to } \{\pi_M(x)\}\times\R^N\}
$$
is a submanifold cut out transversally by the equations
$f=0$ and ${df|}_{\R^N}=0$;
\item[3)] the map
$$
FT_f \ni x \longmapsto [\pi_{M*} df(x)] \in ST_{\pi_M(x)}^*M,
$$
where $\pi_{M*}df(x)$ is the unique $1$-form at $\pi_M(x)$ such that
$df(x)$ is its pull-back by~$\pi_M$,
defines a diffeomorphism $FT_f\stackrel{\cong}{\longrightarrow} L$;
\item[4)] $f$ is equal to a non-degenerate quadratic form $Q:\R^N\to\R$ outside of
a compact subset in $M\times\R^N$.
\end{itemize}
Then $\{f=0\}$ is called a {\it quadratic at infinity generating hypersurface\/}
for the Legendrian submanifold~$L\subset ST^*M$.

\begin{exm}
\label{ExampleGen}
Let $f:M\to\R$ be a smooth function such that $0$ is not a critical value.
Then the hypersurface $\{f=0\}\subset M\; (=M\times\R^0)$ generates the
Legendrian submanifold
$$
L_f=\{[df(x)]\in ST^*M \mid f(x)=0\}.
$$
Thus, in this case $L$ is the Legendrian lift of the co-oriented generating hypersurface
and this hypersurface is the wave front of $L$.
\end{exm}

If $Q':\R^{N'}\to\R$ is another non-degenerate quadratic form
and $\phi:\R^{N'}\to [0,1]$ is a cut-off function with $\|d\phi\|\le 1$
that is equal to $1$ on a sufficiently large ball, then the zero set
of the function
$$
\phi\cdot(f-Q) + Q + Q':M\times\R^{N+N'}\longrightarrow\R
$$
is also a quadratic at infinity generating hypersurface for the same Legendrian
submanifold~$L$. This operation on generating hypersurfaces is called {\it stabilisation}.

\begin{thm}[{cf.~\cite[Theorem 4.2.1]{ElGr}}]
\label{GenHyp}
Suppose that $\{L_t\}_{t\in [0,1]}$ is a Legendrian isotopy in $ST^*M$
such that $L_0=L_f$ for a function $f:M\to\R$.
Then there exist an $N\ge 0$ and a smooth family of quadratic at infinity generating
hypersurfaces $\{f_t=0\}\subset M\times\R^N$ for $L_t$ such that $\{f_0=0\}$ is
a stabilisation of $\{f=0\}$.
\end{thm}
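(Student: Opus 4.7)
The plan is to adapt the Chekanov--Sikorav generating-function technique to the present setting of generating hypersurfaces. First, by the Legendrian isotopy extension theorem, extend the family $\{L_t\}$ to an ambient contact isotopy $\phi_t\in\mathrm{Cont}(ST^*M)$ with $\phi_0=\id$ and $\phi_t(L_0)=L_t$, generated by a smooth time-dependent contact Hamiltonian $H_t$. Since $M$ is closed, the flow is globally well defined on $[0,1]$.

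Next, choose a partition $0=t_0<t_1<\dots<t_k=1$ so fine that each increment $\psi_i:=\phi_{t_i}\circ\phi_{t_{i-1}}^{-1}$ is $C^1$-close to the identity on a compact neighbourhood of the Legendrian it will act on. In such a $C^1$-small neighbourhood of the identity, any contact transformation of $ST^*M$ admits an explicit description by a small ``local'' generating function on a fibre $\R^{N_i}$, so that the image of a Legendrian described by a scalar wave-front acquires a generating hypersurface $\{g_i=0\}\subset M\times\R^{N_i}$ equal to a non-degenerate quadratic form outside a compact set (the quadratic piece being supplied by the standard trick of gluing on a non-degenerate quadratic form via a cut-off, exactly as in the definition of stabilization). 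Starting from the given $f$ and successively applying this local construction produces a generating hypersurface of $L_{t_i}$ at each node of the partition.

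To merge the pieces into a single smooth family I would use Chaperon's composition formula: the generating hypersurface of the composition of two small contact transformations is naturally described on the product of the two fibres, essentially by summing the two local generating functions. Iterating this gives, for every $s\in [0,1]$, a quadratic-at-infinity generating hypersurface $\{F_s=0\}\subset M\times\R^N$ with $N=\sum N_i$; by construction $F_0$ is a stabilization of $f$ because the auxiliary fibre variables enter into $F_0$ only through the quadratic form used to arrange quadratic behaviour at infinity. Smoothness in $s$ is then obtained by treating the partition points as smooth parameters of the construction rather than as fixed breakpoints, which is the standard way of upgrading Chekanov's piecewise construction to a smooth family.

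The main obstacle will be verifying that the quadratic-at-infinity condition, together with the transversality conditions (1)--(3) in the definition of a generating hypersurface, are preserved throughout the Chaperon composition and the smoothing procedure. This is delicate but essentially routine: one must keep track of the signatures of the added quadratic forms and ensure that the cut-off functions used to arrange the quadratic behaviour at infinity do not destroy the transversality of $\{f=0\}$ and ${df|}_{\R^N}=0$. Once these standard generating-function manipulations are carried out, setting $f_t:=F_t$ delivers the required smooth family, and the fact that $\{f_0=0\}$ is a stabilization of $\{f=0\}$ drops out of the construction.
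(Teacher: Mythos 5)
The paper itself gives no proof of Theorem~\ref{GenHyp}: it is imported from Eliashberg--Gromov \cite{ElGr} (Theorem~4.2.1), with the explicit caveat in the following remark that the published proof contained an inaccuracy only recently corrected by Pushkar' \cite{Pu1,Pu2}, and with \cite{Fe} cited for a similar statement. Your outline is precisely the Chekanov--Chaperon strategy underlying those references (ambient contact isotopy, fine subdivision into $C^1$-small increments, local generating data for each increment, composition formula, smoothing in the parameter), so in spirit you are following the same route as the literature the paper leans on.

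The gap is that everything which is actually hard in that route is deferred under the phrase ``delicate but essentially routine'', and it is exactly there that the known trouble lies. Two claims carry the whole proof: (i) that a $C^1$-small contactomorphism of $ST^*M$ turns a quadratic-at-infinity generating hypersurface into another one with extra fibre variables, and (ii) that a Chaperon-type composition formula holds in the hypersurface setting while preserving conditions (1)--(3) and the quadratic-at-infinity normalisation. Neither is proved in your sketch, and neither is routine: contactomorphisms of $ST^*M$ are not lifts of Hamiltonian diffeomorphisms, so one must pass to the homogeneous symplectization $T^*M\setminus 0$ and control homogeneity of all data, and it is precisely this lifting/composition step where the argument of \cite{ElGr} was found to be inaccurate (Pushkar's first preprint is about counterexamples to lifting of Hamiltonian and contact isotopies). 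Moreover, ``summing the two local generating functions'' has no direct meaning for hypersurfaces; one must work with defining functions having $0$ as a regular value, verify that the composed function still satisfies the fibre-transversality condition (2) and condition (3), and show it becomes quadratic at infinity after a fibrewise change of variables, while the cut-off used to restore quadratic behaviour must not alter the zero set near the relevant critical locus. Finally, that $\{f_0=0\}$ is a stabilisation of $\{f=0\}$ in the precise sense defined in the paper (zero set of $\phi\cdot(f-Q)+Q+Q'$) also requires an argument. As it stands your proposal is a correct road map, but completing it amounts to reproving the Chekanov-type theorem for spherized cotangent bundles, i.e.\ the content of \cite{Pu2} and \cite{Fe}, rather than a routine verification.
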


\begin{rem}
An inaccuracy in the proof of~\cite[Theorem 4.2.1]{ElGr} was
pointed out and corrected by Pushkar' in two recent preprints~\cite{Pu1} and~\cite{Pu2}.
A result similar to Theorem~\ref{GenHyp} may also be found in~\cite{Fe}.
In a somewhat different context, generating hypersurfaces appeared
in the classical text~\cite[\S 20.7]{AVG}.
\end{rem}

\begin{exm}
\label{Conormal}
The spherical conormal bundle $SN^*V\subset ST^*M$ of a submanifold $V\subset M$
is a Legendrian submanifold. For instance, if $V$ is a point $v\in M$, then
$SN^*V=ST^*_vM$ is the fibre of $ST^*M$ at~$v$. The co-geodesic flow of a Riemann metric
on $M$ defines a Legendrian isotopy of $SN^*V$ to the Legendrian lift of
the co-oriented boundary of a geodesic tube around~$V$. Taking a function
$f:M\to\R$ such that $\{f<0\}$ is such a tube, we see that $SN^*V$
is Legendrian isotopic to a submanifold of the form considered in Example~\ref{ExampleGen}.
In particular, it has a quadratic at infinity generating hypersurface.
\end{exm}

\subsection{Legendrian non-displacement in the spherical cotangent bundle
of a fibred manifold}
Let $\pi: P\to M$ be a submersion. For every subset $U\subset ST^*M$, its pull-back
in $ST^*P$ is the subset
$$
\pi^*(U):=\left\{ [\pi^*\xi]\in ST^*P \mid [\xi]\in U\right\}.
$$
The pull-back of a Legendrian submanifold in $ST^*M$
is a Legendrian submanifold of $ST^*P$. For instance,
if $f:M\to\R$ is a function such that $0$ is not a critical value
and $L_f$ is the Legendrian submanifold in $ST^*M$ generated by $\{f=0\}$
as in Example~\ref{ExampleGen},
then
$$
\pi^*(L_f) = L_{f\circ\pi} \subset ST^*P.
$$
Similarly, the pull-back of the spherical conormal bundle of a submanifold $V\subset M$
is the spherical conormal bundle of its pre-image~$\pi^{-1}(V)\subset P$.

\begin{thm}
\label{NonDispl}
Let $P$ and $M$ be closed connected manifolds and $\pi:P\to M$ a fibre bundle.
Any Legendrian submanifold of $ST^*P$ Legendrian isotopic to $\pi^*(L_f)$
for some function $f:M\to\R$ intersects~$\pi^*(ST^*M)$.
\end{thm}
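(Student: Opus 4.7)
The approach is to translate the intersection condition into a fibrewise critical-point problem for a family of generating functions, and then detect the required critical point at level zero via Lusternik--Schnirelmann / min-max spectral invariants.

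First, I would apply Theorem~\ref{GenHyp} to a Legendrian isotopy $\{L_t\}_{t\in[0,1]}$ realising the hypothesis, with $L_0=\pi^*(L_f)=L_{f\circ\pi}$. After a further trivial stabilisation (harmless for the Legendrians), this produces a smooth family of quadratic-at-infinity generating functions $f_t\colon P\times\R^N\to\R$ with $f_0(p,q)=f(\pi(p))+Q(q)$ for a non-degenerate quadratic form $Q$ on~$\R^N$. Set $\bar\pi\colon P\times\R^N\to M$, $\bar\pi(p,q):=\pi(p)$. Unpacking the definitions, the diffeomorphism $FT_{f_t}\cong L_t$ identifies $L_t\cap\pi^*(ST^*M)$ with the set of critical points, at critical value~$0$, of the fibrewise restrictions $h^{(t)}_m:=f_t|_{\bar\pi^{-1}(m)}=f_t|_{F_m\times\R^N}$ as $m$ ranges over~$M$, where $F_m:=\pi^{-1}(m)$ is the (closed) fibre of~$\pi$.

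Each $h^{(t)}_m$ is smooth and quadratic at infinity on $F_m\times\R^N$. Standard generating-function min-max theory then assigns to every non-zero cohomology class $u\in H^*(F_m;\Z/2)$ a critical value $c_u(t,m)$ of $h^{(t)}_m$, depending continuously on~$(t,m)$. At $t=0$ the function $h^{(0)}_m(p,q)=f(m)+Q(q)$ is Morse--Bott with the single critical value~$f(m)$, so $c_u(0,m)=f(m)$ for every~$u$. Since $L_f\ne\emptyset$ and $0$ is a regular value of~$f$ on the closed connected manifold~$M$, the function~$f$ takes both positive and negative values on~$M$.

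The core of the proof --- and its main expected obstacle --- is to show that for some class~$u$ the continuous function $c_u(1,\cdot)\colon M\to\R$ changes sign on~$M$; the intermediate value theorem on the connected $M$ will then produce $m$ with $c_u(1,m)=0$, yielding the required fibrewise critical point at level zero. The difficulty is that a naive continuity-in-$t$ argument does not preserve such sign changes, as the selectors may drift. I plan to resolve this by combining three ingredients: the pair of extremal selectors $c_-:=c_{[\mathrm{pt}]}$ and $c_+:=c_{[F_m]}$, which satisfy $c_-(t,m)\le c_+(t,m)$; the fact that $\{f_t=0\}$ remains a smooth generating hypersurface throughout the isotopy (built into Theorem~\ref{GenHyp}), so that $0$ is never a critical value of any $f_t$ and hence the full spectral invariants $C_\pm(t)$ of $f_t$ retain their initial signs $C_-(t)<0<C_+(t)$ for all~$t\in[0,1]$; and Lusternik--Schnirelmann-type comparisons between the fibrewise invariants $c_\pm(t,m)$ and the full invariants $C_\pm(t)$, made available by the product form of the stabilisation $f_0=f\circ\pi+Q$. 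Establishing these comparisons in a form strong enough to force $c_\pm(1,\cdot)$ to change sign on~$M$ is the technical heart of the argument.
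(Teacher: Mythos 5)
Your reduction is correct as far as it goes: identifying $L_1\cap\pi^*(ST^*M)$ with critical points at level $0$ of the fibrewise restrictions $h^{(1)}_m=f_1|_{\pi^{-1}(m)\times\R^N}$ is exactly right (disjointness from the zero level of $df_1$ in the $\R^N$-directions plus regularity of $0$ for $f_1$ makes the correspondence clean), and the facts you list about the selectors at $t=0$ and about sign persistence of the global invariants $C_\pm(t)$ are unobjectionable. But the step you yourself flag as the technical heart --- forcing $c_u(1,\cdot)$ to change sign on $M$ --- is a genuine gap, and the three ingredients you propose cannot close it. The Lusternik--Schnirelmann-type comparisons available here only sandwich the fibrewise invariants, $C_-(t)\le c_\pm(t,m)\le C_+(t)$, and since the interval $[C_-(1),C_+(1)]$ contains $0$, this is perfectly compatible with $c_u(1,\cdot)$ being, say, strictly positive for every $m$ and every $u$; nothing in the sandwich remembers that $c_u(0,\cdot)=f$ changed sign. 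Nor can continuity in $t$ rescue this: the intermediate Legendrians $L_t$ are allowed to meet $\pi^*(ST^*M)$, so the fibrewise selectors may legitimately cross $0$ during the isotopy and all drift to one sign by $t=1$; running the intermediate value theorem in the $t$-direction only reproduces such harmless intermediate intersections, not a contradiction at $t=1$. In short, spectral numbers of the fibrewise restrictions, together with the global invariants of $f_t$, do not see enough of the parametric structure over $M$ to rule out the disjointness scenario.

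For comparison, the paper derives the contradiction from precisely the global, parametric information your selectors discard. Disjointness of $L_1$ from $\pi^*(ST^*M)$ means the hypersurface $H_1=\{f_1=0\}$ is nowhere tangent to the fibres of $P\times\R^N\to M$, so (being standard at infinity) $\pi_M|_{H_1}$ is a fibre bundle by Ehresmann's theorem; the relative homotopy lifting property then yields a compactly supported self-map $\phi_0$ of $H$, homotopic to the identity, with $\pi_0=\pi_1\circ\phi_0$. A compactly supported $\Z/2$-cohomology class on $H_0$, dual to the intersection with $P\times V_+$ (after stabilising so that both inertia indices of $Q$ are at least $2$), restricts non-trivially to the fibres of $\pi_0$ over $\{f<0\}$ and trivially over $\{f>0\}$, while the degree-one/Poincar\'e-duality argument shows such a class would have to restrict non-trivially to \emph{all} regular fibres --- contradiction. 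If you want to stay within a generating-function framework, you would need an invariant that is genuinely parametric over $M$ (e.g.\ min-max with classes drawn from $H^*(M)\otimes H^*(F)$ on the total space, or a relative/interlinking homology of the pair $(L_t,\pi^*(ST^*M))$), rather than pointwise-in-$m$ selectors; as written, the proposal does not contain the idea needed to finish.
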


\begin{proof}
Let us assume that there is a Legendrian submanifold $L\subset ST^*P-\pi^*(ST^*M)$ that
is Legendrian isotopic to $\pi^*(L_f)$.
The Legendrian submanifold $\pi^*(L_f)=L_{f\circ\pi}$ has a (trivially) quadratic
at infinity generating hypersurface $\{f\circ\pi=0\}\subset P$. By Theorem~\ref{GenHyp},
there exists a family of hypersurfaces $H_t=\{f_t=0\}\subset P\times\R^N$ such that
\begin{itemize}
\item[1)] $H_0=\{f_0=0\}$ is a stabilisation of $\{f\circ\pi=0\}\subset P$;
\item[2)] $H_1=\{f_1=0\}$ generates~$L$;
\item[3)] outside of a compact subset of $P\times\R^N$, all hypersurfaces $H_t$ coincide
with the hypersurface $P\times\{Q=0\}$, where $Q$ is a non-degenerate quadratic
form on~$\R^N$.
\end{itemize}
Applying additional stabilisation, if necessary, we may assume that both
inertia indices $\varkappa_\pm(Q)\ge 2$. This guarantees, in particular,
that the hypersurfaces $H_t$ are connected.

The assumption that $L$ is disjoint from $\pi^*(ST^*M)$ means
that its generating hypersurface $H_1$
is nowhere tangent to the fibres of the composite projection
$\pi_M:P\times\R^N\to P\stackrel{\pi}{\longrightarrow} M$.
In other words, the restriction ${\pi_M|}_{H_1}$ is a submersion.
Since $H_1$ is standard at infinity, it follows by Ehresmann's theorem
that ${\pi_M|}_{H_1}: H_1\to M$ is a fibre bundle.

On the other hand, the fibre of the projection ${\pi_M|}_{H_0}: H_0\to M$ over a point $x\in M$
is essentially a product of the form $\pi^{-1}(x)\times \{Q=-f(x)\}$.
In particular, the fibres over the sets $\{f>0\}$ and $\{f<0\}$ are embedded
in topologically different ways. (This statement will be made precise at
the end of the proof.)

The main point of the following somewhat technical argument is to show that this behaviour
of the fibres contradicts the fact that the hypersurfaces $H_0$ and $H_1$ are isotopic within the
class of hypersurfaces satisfying condition~(3).

Let us fix a parametrisation
$$
\iota_t:H\longrightarrow H_t, \qquad t\in [0,1],
$$
of the family $\{H_t\}$ such that $\iota_t$ is the same standard
embedding outside of a compact subset in~$H$. Let further
$$
\pi_t := {\pi_M|}_{H_t}\circ \iota_t: H\longrightarrow M
$$
be the induced family of projections to~$M$.

Since $\pi_1:H\to M$ is a fibre bundle, we may apply
the relative homotopy lifting property~\cite[Proposition~4.48]{Ha}
to the homotopy of maps
$$
\pi_t: H\longrightarrow M, \qquad t\in[0,1].
$$
It follows that there is a homotopy of (continuous) maps
$$
\phi_t :H\longrightarrow H, \qquad t\in[0,1],
$$
such that $\phi_1\equiv\id_H$, $\phi_t=\id_H$ outside of a compact set for all~$t$,
and
$$
\pi_t = \pi_1\circ \phi_t, \qquad t\in [0,1].
$$
Thus, $\pi_0=\pi_1\circ\phi_0$, where $\phi_0:H\to H$ is homotopic to
the identity in the class of compactly supported self-maps of~$H$.

Suppose now that $\beta\in \HH_c^*(H;\Z/2)$ is a cohomology class with
compact support such that its restriction to a fibre $\pi_0^{-1}(x)$ is
non-zero. We claim that its restriction to every regular fibre of $\pi_0$
must also be non-zero.
Indeed, note first that
$$
0\ne \beta|_{\pi_0^{-1}(x)} = (\phi_0^*\beta)|_{\pi_0^{-1}(x)} = (\phi_0|_{\pi_0^{-1}(x)})^*(\beta|_{\pi_1^{-1}(x)}),
$$
where the first equality holds because $\phi_0$ acts as the identity on the cohomology of $H$
and the second because $\phi_0$ maps the fibre of $\pi_0$ over~$x$ to the fibre of $\pi_1$
over~$x$.
Since the fibres of the fibre bundle $\pi_1$ are all isotopic,
we conclude that
$$
0\ne \beta|_{\pi_1^{-1}(y)}\quad\mbox{ for all } y\in M.
$$
However, if $y$ is a regular value of $\pi_0$, then the restriction
$$
\phi_0|_{\pi_0^{-1}(y)}: \pi_0^{-1}(y)\longrightarrow \pi_1^{-1}(y)
$$
is a proper map of equidimensional manifolds that has $\Z/2$-degree~$1$.
It follows from Poincar\'e duality and naturality of the $\cup$-product that
this map induces an injection on $\Z/2$-cohomology with compact support
(cf.~\cite[Lemma~2.2]{Wa}).
Hence,
$$
\beta|_{\pi_0^{-1}(y)} = (\phi_0^*\beta)|_{\pi_0^{-1}(y)} = (\phi_0|_{\pi_0^{-1}(y)})^*(\beta|_{\pi_1^{-1}(y)})\ne 0,
$$
as claimed.

Thus, to complete the proof of the theorem by contradiction, we need to exhibit
a compactly supported cohomology class on $H$ such that its restrictions to the
regular fibres of $\pi_0$ can be both non-zero and zero. Let $V_+\subset\R^N$
be a maximal linear subspace on which the form $Q$ is positive definite.
By construction, $\dim V_+=\varkappa_+\ge 2$.
Consider the submanifold
$P\times V_+\subset P\times \R^N$. Its intersection with $H_0$ is compact
and defines (by duality) a compactly supported cohomology class on $H_0$.
Let $\beta$ be the pull-back of this class to~$H$.

The restriction of $\beta$ to a regular fibre $\pi_0^{-1}(x)$ is then dual to the
pre-image of the intersection of the fibre of ${\pi_M|}_{H_0}$ over $x$ with $P\times V_+$.
If $f(x)>0$, this intersection is empty. If $f(x)<0$, it is the product
$\pi^{-1}(x)\times S$, where the sphere $S=V_+\cap \{Q=-f(x)\}$ is the `waist'
of the quadric, and so the dual compactly supported cohomology class
on $\pi^{-1}(x)\times \{Q=-f(x)\}$ is non-zero. Hence, $\beta$ has the required property.
\end{proof}

\begin{rem}
To explain the idea of the proof, let us apply it directly to the simplest case when $N=0$
and so $H_t$ are (let us say, connected) hypersurfaces in~$P$ isotopic to $H_0=\pi^{-1}(\{f=0\})$.
Choosing a parametrisation $\iota_t:H\to H_t$, we get a homotopy of maps $\pi_t=\pi\circ\iota_t:H\to M$
such that $\pi_1$ is a fibre bundle projection and $\pi_0$ is not surjective
(because its image is $\{f=0\}\subsetneq M$). Using the homotopy
lifting property, we obtain a homotopy $\phi_t:H\to H$ such that $\phi_1=\id_H$
and $\pi_0=\pi_1\circ\phi_0$. It follows from the latter equality that $\phi_0$ can not be surjective.
This contradicts the fact that a map homotopic to the identity map of the {\it closed\/} manifold~$H$
has degree~$1$ and hence must be onto.
\end{rem}

\section{Transverse families of Legendrian submanifolds}

\subsection{Families}
A parametrised family of Legendrian submanifolds in a contact manifold $(X,\ker\alpha)$
over a base~$B$ is a map
$$
F: \underline{L}\longrightarrow X,
$$
where $\pi:\underline{L}\to B$ is a fibre bundle and the restriction
$$
F|_{\pi^{-1}(b)}: \pi^{-1}(b) \longrightarrow L_b\subset X
$$
is a Legendrian embedding for every~$b\in B$.
Two parametrised families $F_1,F_2: \underline{L}\to X$
are called equivalent if $F_1 = F_2\circ \Phi$
for a diffeomorphism $\Phi:\underline{L}\to \underline{L}$ such that $\pi\circ\Phi=\pi$.

\begin{df}
A family $\mathcal L=\{L_b\}_{b\in B}$ of Legendrian submanifolds
is an equivalence class of parametrised families.
\end{df}

A family of Legendrian submanifolds is called {\it constant\/} if $L_b$
is the same Legendrian submanifold in $X$ for all~$b\in B$.
Note that a constant family may have non-constant parametrisations.

\begin{df}
A family of Legendrian submanifolds in $(X,\ker\alpha)$ is called {\it transverse\/}
if it has a parametrisation $F: \underline{L}\to X$
such that the pull-back $F^*\alpha$ of the contact form
does not vanish anywhere on $\underline{L}$.
\end{df}

It is obvious that this property depends neither on the choice
of a parametrisation of the family nor on the choice of a
contact form defining the contact structure on $X$.

\begin{exm}[Transverse families and positive isotopies]
The simplest example of a family of Legendrian submanifolds is
a Legendrian isotopy $\{L_t\}_{t\in [0,1]}$ that can be parametrised
by the product $L_0\times [0,1]$. This family is transverse
if and only if the isotopy is either positive or negative,
see~\S\ref{NonNegLeg&Cont}.
\end{exm}

\begin{exm}[Transverse families and fibre bundles]
Let $p:M\to B$ be a fibre bundle. Then $\{SN^*p^{-1}(b)\}_{b\in B}$
is a transverse family of Legendrian submanifolds in $ST^*M$
with base $B$. It is tautologically parametrised
by $\underline{L}=p^*(ST^*B)$ and the projection
$\pi:\underline{L}\to B$ is the composition
$$
p^*(ST^*B)\subset ST^*M\longrightarrow M\stackrel{p}{\longrightarrow} B
$$
In particular, one can take $p=\id_M$ and obtain the family
of all fibres of $ST^*M$.
\end{exm}

\subsection{Stabilisation of contact manifolds}
\label{Stab}
Let $(X,\ker\alpha)$ be a contact manifold and $B$ an arbitrary manifold.
The {\it $B$-stabilisation\/} of $X$ is the contact manifold
$$
X^B := (X\times T^*B,\ker(\alpha\oplus\lcan)),
$$
where $\lcan = p\,dq$ is the canonical $1$-form on $T^*B$.
If $\alpha'=e^f\alpha$ is another contact form defining the same
(co-oriented) contact structure on $X$, then the map
\begin{equation}
\label{StabInvar}
X\times T^*B\ni (x,q,p) \longmapsto (x,q,e^{f(x)}p)\in X\times T^*B
\end{equation}
defines a contactomorphism
$$
(X\times T^*B,\ker(\alpha\oplus\lcan))\stackrel{\cong}{\longrightarrow}(X\times T^*B,\ker(\alpha'\oplus\lcan)).
$$
Hence, the $B$-stabilisation of $X$ is well-defined as a contact manifold.

\begin{exm}
\label{StabST*M}
Suppose that $X=ST^*M$ is the spherical cotangent bundle of a manifold $M$
with its canonical contact structure. Then the $B$-stabilisation of $X$ is
naturally contactomorphic to the open subset
$$
ST^*(M\times B) - \pi_B^*(ST^*B)
$$
of the spherical cotangent bundle of the product $M\times B$,
where $\pi_B:M\times B\to B$ is the projection.
Indeed, let $\alpha$ be any contact form on $ST^*M$.
There exists a unique fibrewise starshaped embedding
$\iota:ST^*M\hookrightarrow T^*M$ such that $\iota^*\lcan=\alpha$.
The map
$$
ST^*M\times T^*B \ni (\xi,\eta) \longmapsto [\iota(\xi)\oplus\eta] \in S(T^*M\times T^*B) = ST^*(M\times B)
$$
defines a contactomorphism
$$
(ST^*M\times T^*B, \ker(\alpha\oplus\lcan))\stackrel{\cong}{\longrightarrow} ST^*(M\times B) - \pi_B^*(ST^*B).
$$
\end{exm}

\subsection{Legendrian suspension}
\label{Susp}
To each Legendrian family $\mathcal L$ in $X$ with base $B$, we can associate a Legendrian
submanifold $\widetilde{\mathcal L}$ in the $B$-stabilisation of~$X$
called the {\it Legendrian suspension\/} of $\mathcal L$.
Indeed, let $F:\underline{L}\to X$ be a parametrisation of the family.
The pull-back $F^*\alpha\in\Lambda^1(\underline{L})$ vanishes on the tangent spaces
to the fibres of $\pi:\underline{L}\to B$ and therefore there is a unique fibrewise map
$$
\widetilde\alpha:\underline{L}\longrightarrow T^*B
$$
such that $\widetilde\alpha^*\lcan=F^*\alpha$.
Then $\widetilde{\mathcal L}$ is the image of the Legendrian embedding
$$
(F,-\widetilde \alpha): \underline{L} \longrightarrow X\times T^*B.
$$
Note that the Legendrian submanifold $\widetilde{\mathcal L}$
does not depend on the choice of the parametrisation of the family.
Furthermore, if $\alpha'=e^f\alpha$, then the associated Legendrian
submanifold $\widetilde{{\mathcal L}'}$ in $(X\times T^*B,\ker(\alpha'\oplus\lcan))$
is the image of $\widetilde{\mathcal L}$ under the contactomorphism~\eqref{StabInvar}.

\begin{exm}
\label{SuspConst}
The suspension of a constant family whose image is a Legendrian
submanifold $L\subset X$ is the product submanifold $L\times N^*B\subset X\times T^*B = X^B$,
where $N^*B$ is fancy notation for the zero section in $T^*B$
(the zero section is the conormal bundle of $B$ considered
as a submanifold of itself).
\end{exm}

\begin{exm}
\label{SuspTrans}
It is an immediate corollary of the definitions that a family
of Legendrian submanifolds in $X$ is transverse if and only if
its Legendrian suspension does not intersect the subset
$$
X\times N^*B \subset X\times T^*B = X^B,
$$
where again $N^*B$ is the zero section of $T^*B$.
\end{exm}

\begin{exm}[Suspension in $ST^*M$]
\label{SuspST*M}
In the case when $X=ST^*M$, it is natural to combine Examples~\ref{SuspConst}
and~\ref{SuspTrans} with the contactomorphism
$$
(ST^*M)^B \stackrel{\cong}{\longrightarrow} ST^*(M\times B) - \pi_B^*(ST^*B)
$$
from Example~\ref{StabST*M}:

\smallskip
\noindent
{\bf (i)}
The suspension of a constant family with image $L\subset ST^*M$ is
identified with the pull-back $\pi_M^*(L)\subset ST^*(M\times B)$,
where $\pi_M$ is the projection on~$M$.

\smallskip
\noindent
{\bf (ii)}
The image in $ST^*(M\times B)$ of the suspension of a transverse family
in $ST^*M$ is contained in the open set
$$
ST^*(M\times B) - \pi_B^*(ST^*B) - \pi_M^*(ST^*M),
$$
where $\pi_B$ and $\pi_M$ are the projections on $B$ and $M$,
respectively.
\end{exm}

\begin{rem}
Eliashberg and Polterovich~\cite[\S 2.2]{ElPo} introduced contact stabilisation
and Legendrian suspension in the case when $B$ is the circle and for families
of Legendrian submanifolds parametrised by product bundles $L\times S^1$,
i.e.\ for Legendrian loops with trivial monodromy.
Our Examples~\ref{SuspConst} and~\ref{SuspTrans} are straightforward
generalisations of the discussion of constant and positive Legendrian loops there.
\end{rem}

\subsection{Non-contractibility of transverse families}
Two families of Legendrian submanifolds in $(X,\ker\alpha)$ over the same base~$B$
are homotopic if they are restrictions to $B\times\{0\}$ and $B\times\{1\}$ of
a family of Legendrian submanifolds over~$B\times [0,1]$. A family is called
{\it contractible\/} if it is homotopic to a constant family.

\begin{thm}
\label{NoContrTrans}
Let $M$ be a closed manifold. Suppose that an isotopy class
of Legendrian submanifolds of $ST^*M$ contains $L_f$ for
a function $f:M\to\R$. Then there is no contractible
transverse family of Legendrian submanifolds over
a closed base $B$ in that class.
\end{thm}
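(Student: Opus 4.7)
The strategy is to combine the Legendrian suspension construction of \S\ref{Susp} with the non-displacement theorem~\ref{NonDispl}. Suppose, for contradiction, that there exists a contractible transverse family $\mathcal L = \{L_b\}_{b \in B}$ over a closed base $B$ in the Legendrian isotopy class of $L_f$, and consider its Legendrian suspension $\widetilde{\mathcal L}$, a Legendrian submanifold of the $B$-stabilisation $(ST^*M)^B$. Via the identification of Example~\ref{StabST*M}, view $\widetilde{\mathcal L}$ as a Legendrian submanifold of the open subset
$$
ST^*(M \times B) - \pi_B^*(ST^*B) \subset ST^*(M \times B).
$$

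Two properties of the suspension, both established in \S\ref{Susp}, drive the argument. First, since $\mathcal L$ is transverse, Example~\ref{SuspST*M}(ii) shows that $\widetilde{\mathcal L}$ is disjoint from $\pi_M^*(ST^*M)$. Second, any contraction of $\mathcal L$ to a constant family with image $L' \subset ST^*M$ is itself a family of Legendrian submanifolds over $B \times [0,1]$; applying the suspension construction slicewise in $t$ yields a Legendrian isotopy in $(ST^*M)^B$ between $\widetilde{\mathcal L}$ and the suspension of the constant family, which by Example~\ref{SuspST*M}(i) is $\pi_M^*(L')$. Because every slice $L_b$ lies in the Legendrian isotopy class of $L_f$, so does $L'$; pulling back a Legendrian isotopy from $L'$ to $L_f$ through $\pi_M$ produces a further Legendrian isotopy in $ST^*(M\times B)$ from $\pi_M^*(L')$ to $\pi_M^*(L_f) = L_{f\circ\pi_M}$.

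Concatenating these two isotopies, $\widetilde{\mathcal L}$ is Legendrian isotopic to $\pi_M^*(L_f)$ in $ST^*(M\times B)$. Applying Theorem~\ref{NonDispl} to the fibre bundle $\pi_M : M\times B\to M$ (both $M$ and $M \times B$ are closed because $B$ is) forces $\widetilde{\mathcal L}$ to meet $\pi_M^*(ST^*M)$, contradicting the transversality observation above. The bulk of the difficulty was already absorbed into Theorem~\ref{NonDispl}; the one step that genuinely needs verification here is that a homotopy of families really induces a Legendrian isotopy of suspensions inside the fixed contact manifold $(ST^*M)^B$, which amounts to noting that the construction of \S\ref{Susp} is functorial in the family parameter.
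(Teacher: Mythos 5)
Your argument is correct and is essentially the paper's own proof: suspend the family, identify $(ST^*M)^B$ with $ST^*(M\times B)-\pi_B^*(ST^*B)$ via Example~\ref{StabST*M}, use Example~\ref{SuspST*M} to see that the suspension of a transverse family avoids $\pi_M^*(ST^*M)$ while the suspension of a constant family is a pull-back, and conclude with Theorem~\ref{NonDispl} applied to $\pi_M:M\times B\to M$. You even spell out two points the paper compresses --- that homotopic families have Legendrian isotopic suspensions, and that the constant family's image $L'$ need only be isotopic to $L_f$, with the pulled-back isotopy bridging $\pi_M^*(L')$ and $\pi_M^*(L_f)$ --- so nothing further is needed.
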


\begin{proof}
The Legendrian suspensions of homotopic families over $B$ are Legendrian isotopic.
Using Example~\ref{StabST*M}, we identify the $B$-stabilisation of $ST^*M$
with $ST^*(M\times B)- \pi_B^*(ST^*B)$.
The suspension of a constant family with image~$L_f$
is the pull-back~$\pi_M^*(L_f)\subset ST^*(M\times B)$, whereas the suspension
of a transverse family lies in $ST^*(M\times B)-\pi_M^*(ST^*M)$,
see Example~\ref{SuspST*M}. Such Legendrian submanifolds cannot be Legendrian
isotopic by Theorem~\ref{NonDispl} applied to the product bundle $\pi_M:M\times B\to M$.
Hence, a transverse family in the Legendrian isotopy class of $L_f$
can not be contractible.
\end{proof}

As a first and typical example, let us apply this theorem to the Legendrian isotopy class
of the fibre of~$ST^*M$. Since a positive Legendrian loop is a transverse
family over the circle, we obtain the following result:

\begin{cor}
\label{PosLoops}
There are no contractible positive Legendrian loops in the Legendrian
isotopy class of the fibre of $ST^*M$.
\end{cor}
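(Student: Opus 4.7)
The plan is to reduce the statement directly to Theorem~\ref{NoContrTrans} applied with base $B=S^1$. For this, I need to check three things: that the fibre of $ST^*M$ lies in the Legendrian isotopy class of some $L_f$; that a positive Legendrian loop parametrises a transverse family over $S^1$; and that contractibility in the sense of loops coincides with contractibility in the sense of the definition preceding Theorem~\ref{NoContrTrans}.

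First I would invoke Example~\ref{Conormal} (with $V$ a single point $x\in M$) to exhibit a Legendrian isotopy from the fibre $ST^*_xM=SN^*\{x\}$ to the Legendrian lift of the co-oriented boundary of a small geodesic ball about~$x$. Picking any smooth $f:M\to\R$ whose zero set is that boundary and which is negative inside the ball, the isotoped Legendrian is precisely $L_f$ in the sense of Example~\ref{ExampleGen}. Thus the Legendrian isotopy class of the fibre contains a submanifold of the form $L_f$, so the hypothesis of Theorem~\ref{NoContrTrans} is met.

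Second, I would interpret a positive Legendrian loop as a family of Legendrian submanifolds over $B=S^1$: a loop $\{L_t\}_{t\in S^1}$ in the isotopy class of the fibre is, up to choice of parametrisation, a Legendrian embedding $\underline L\to ST^*M$ where $\underline L\to S^1$ is the relevant fibre bundle. By the transverse-families-and-positive-isotopies example in \S\ref{Susp} (or rather the first example in Section~3), positivity of the loop is equivalent to $F^*\alpha$ being everywhere non-zero on $\underline L$, i.e.\ to the family being transverse in the sense of Definition. Contractibility of the loop (in the topological sense, as a loop in the space of Legendrian submanifolds) produces a homotopy to a constant loop, which is exactly a homotopy of families to a constant family; thus contractibility in the loop sense matches contractibility in the family sense used by Theorem~\ref{NoContrTrans}.

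With these identifications in place, the existence of a contractible positive Legendrian loop in the isotopy class of the fibre would yield a contractible transverse family over the closed base $B=S^1$ in the isotopy class of $L_f$, directly contradicting Theorem~\ref{NoContrTrans}. There is no real obstacle here beyond confirming the dictionary between loops and families; the entire content of the corollary is packaged into Theorem~\ref{NoContrTrans}, so the proof is essentially a one-line invocation once the correspondence is spelled out.
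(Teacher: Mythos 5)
Your proposal is correct and follows exactly the paper's own route: the fibre is Legendrian isotopic to an $L_f$ via Example~\ref{Conormal}, a positive loop is a transverse family over $S^1$, and Theorem~\ref{NoContrTrans} then excludes contractible ones. The only point worth keeping in mind is that Theorem~\ref{NoContrTrans} is stated for closed $M$, which is also the only case in which the corollary has content, as the paper notes immediately afterwards.
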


This corollary can only be meaningful for a closed manifold $M$
with finite fundamental group such that the integral cohomology
ring of its universal cover is generated by a single element,
because otherwise there are no positive loops in that Legendrian isotopy class
whatsoever by~\cite[Corollary~8.1]{ChNe2} and~\cite[Theorem~1.13]{FrLaSch}.

\begin{rem}[A shortcut to a partial order on $\widetilde{\mathrm{Cont}_0}(ST^*M)$]
\label{shortcut}
The nonexistence of contractible positive Legendrian loops in some
Legendrian isotopy class on a contact manifold~$X$ implies the nonexistence
of contractible positive loops of contactomorphisms in $\mathrm{Cont}_0(X)$.
Hence, combining Corollary~\ref{PosLoops}
with~\cite[Criterion 1.2.C]{ElPo}, we obtain a proof of the
universal orderability of ${\mathrm{Cont}_0}(ST^*M)$
for any closed manifold~$M$ that is independent of~\cite{ElKiPo}, \cite{ChNe2},
and~\cite{AlMe}.
\end{rem}

\section{Non-negative Legendrian isotopies and partial orders}

\begin{center}
{\it All Legendrian submanifolds are henceforth assumed closed and connected}.
\end{center}

\subsection{Non-negative Legendrian and contact isotopies}
\label{NonNegLeg&Cont}
A Legendrian isotopy $\{L_t\}_{t\in [0,1]}$ in a contact manifold $(X,\ker\alpha)$
is called non-negative if some (and hence every) parametrisation $\iota_t:L_0\to L_t$
satisfies
\begin{equation}
\label{DefLegNonneg}
\alpha\left(\tfrac{d}{dt}\iota_t(x)\right)\ge 0
\end{equation}
for all $t\in [0,1]$ and $x\in L_0$. Similarly, an isotopy of contactomorphisms
$\{\phi_t\}_{t\in [0,1]}$ is called non-negative if its contact Hamiltonian
\begin{equation}
\label{DefContNonneg}
H(\phi_t(x),t):=\alpha\left(\tfrac{d}{dt}\phi_t(x)\right)\ge 0
\end{equation}
for all $t\in [0,1]$ and~$x\in X$. If the inequalities in~\eqref{DefLegNonneg} and~\eqref{DefContNonneg}
are strict, the isotopies are called positive.

\begin{prop}
\label{ExtNonNeg}
If $\{L_t\}_{t\in [0,1]}$ is a non-negative Legendrian isotopy,
then there exists a compactly supported non-negative
contact isotopy $\{\phi_t\}_{t\in [0,1]}$
such that $\phi_0=\id_X$ and $\phi_t(L_0)=L_{\chi(t)}$
for a non-decreasing function $\chi:[0,1]\twoheadrightarrow [0,1]$.
\end{prop}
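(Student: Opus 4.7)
The plan is to apply the standard Legendrian isotopy extension theorem to obtain a compactly supported contact isotopy $\psi_t$ of $X$ with $\psi_0=\id_X$ and $\psi_t(L_0)=L_t$, and then repair its contact Hamiltonian so that it becomes non-negative without disturbing the motion of $L_0$. Let $K:X\times[0,1]\to\R$ be the contact Hamiltonian generating $\psi_t$. The non-negativity assumption on $\{L_t\}$ translates precisely into the pointwise inequality $K(x,t)\ge 0$ for every $(x,t)$ in the compact submanifold
$$
\Sigma:=\{(x,t)\in X\times[0,1]\mid x\in L_t\},
$$
although $K$ may well take negative values off $\Sigma$, so $\psi_t$ itself need not be non-negative.

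The key step is to construct a compactly supported function $\widetilde K:X\times[0,1]\to\R$ with $\widetilde K\ge 0$ and $\widetilde K|_\Sigma=K|_\Sigma$. To do this, I would fix a tubular neighbourhood $U$ of $\Sigma$ in $X\times[0,1]$ with smooth retraction $r:U\to\Sigma$, set $\widetilde K:=\beta\cdot\bigl((K|_\Sigma)\circ r\bigr)$ on $U$, and extend by zero to $X\times[0,1]$, where $\beta$ is a bump function supported in $U$ and equal to $1$ on a smaller neighbourhood of $\Sigma$. Since $K|_\Sigma\ge 0$, the resulting $\widetilde K$ is automatically non-negative, compactly supported, and restricts to $K$ along $\Sigma$.

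The final ingredient is the standard fact that the infinitesimal Legendrian deformation of a Legendrian submanifold $L\subset X$ induced by a contact Hamiltonian $H$ depends only on the restriction $H|_L$; explicitly, by the contact neighbourhood theorem a tubular neighbourhood of $L$ is contactomorphic to the $1$-jet space $J^1(L)$, in which Legendrian deformations of the zero section correspond tautologically to $1$-jets of functions on $L$. Since $\widetilde K$ and $K$ coincide along $L_t$ for every $t$, a short uniqueness argument for the associated Legendrian deformation equation shows that the flow $\phi_t$ of $\widetilde K$ satisfies $\phi_t(L_0)=L_t$ as subsets of $X$. Non-negativity then follows from $\widetilde K\ge 0$, while compact support and $\phi_0=\id_X$ are built into the construction, so $\chi(t)=t$ completes the proof.

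The only substantive obstacle is the passage $K\rightsquigarrow\widetilde K$: once one accepts that off-Legendrian modifications of the Hamiltonian leave the induced motion of the Legendrian family untouched, the rest is an elementary extension argument. The non-decreasing reparametrisation $\chi$ allowed by the statement is not needed in the argument above, but provides welcome flexibility in subsequent applications---for instance, to arrange that $\phi_t$ be literally constant near $t=0$ and $t=1$, which is convenient when concatenating such isotopies.
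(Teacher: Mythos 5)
Your proof is correct, and it takes a genuinely different route from the paper's. You extend an arbitrary parametrisation to a compactly supported contact isotopy $\psi_t$ with Hamiltonian $K$, note that non-negativity of $\{L_t\}$ is exactly $K\ge 0$ along $\Sigma=\bigcup_t L_t\times\{t\}$, and then surger $K$ off $\Sigma$ into a globally non-negative $\widetilde K$ with $\widetilde K|_\Sigma=K|_\Sigma$. The one step that carries real weight is your ``short uniqueness argument''; the clean way to make it precise is to observe that a contact Hamiltonian vanishing on a Legendrian $L$ generates a vector field tangent to $L$ (along $L$ one has $\alpha(X_H)=H=0$ and $\iota_{X_H}d\alpha$ agrees with $-dH$ up to a multiple of $\alpha$, so $X_H|_L$ lies in the $d\alpha$-orthogonal of the Lagrangian $TL$ inside $\ker\alpha$, i.e.\ in $TL$), applied to $\psi_t^{-1}\circ\phi_t$, whose Hamiltonian is a conformal multiple of $(\widetilde K-K)\circ\psi_t$ and hence vanishes on $L_0$; in particular only zeroth-order agreement along $L_t$ is needed, so your cut-off construction suffices. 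The paper argues instead locally and by hand: it subdivides $[0,1]$ so that on each subinterval the $L_t$ are $1$-jet graphs in a Weinstein neighbourhood of $L_{t_j}$, extends by the explicit non-negative shift $\xi\mapsto\xi+j^1f_t(q)$ cut off near the zero section, and concatenates, the reparametrisation $\chi$ (flat at the interior $t_j$, with $\chi'(0)\ne 0$ as in Remark~\ref{ChiDer}) existing solely to smooth the concatenation. Your global argument buys the stronger conclusion $\chi=\id$ and still yields $\alpha\bigl(\tfrac{d}{dt}\phi_t(x)|_{t=0}\bigr)=K(x,0)$ for $x\in L_0$, so it serves the later use in Proposition~\ref{Nonneg2Pos} equally well; the paper's version is more elementary and self-contained, constructing the non-negative extension directly in jet coordinates.
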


\begin{rem}
The Legendrian isotopy extension theorem asserts that for every parametrisation
$\iota_t:L_0\to L_t$ of an arbitrary Legendrian isotopy, there is a contact
isotopy $\phi_t$ such that $\phi_0=\id$ and ${\phi_t|}_{L_0}=\iota_t$,
see e.g.~\cite[Theorem~2.6.2]{Ge}.
However, it is not true that there exists a non-negative contact extension
for every parametrisation of a non-negative Legendrian isotopy. For instance,
a non-negative contact isotopy can only extend the constant parametrisation
of a constant isotopy, cf.~\cite[Proof of Lemma~4.12(i)]{GuiKaScha}.
\end{rem}

\begin{proof}[Proof of Proposition\/~{\rm \ref{ExtNonNeg}}]
By the Legendrian version of the Darboux--Weinstein theorem, a Legendrian submanifold $L$
has a neighbourhood $U$ contactomorphic to a neighbourhood~$U'$ of
the zero section in the $1$-jet bundle $\Jet^1(L)=\R\oplus T^*L$
with its canonical contact structure~$\ker(du-\lcan)$. Any Legendrian
submanifold sufficiently close to $L$ in $C^1$-topology is then
represented as the graph of the $1$-jet of a function $f:L\to\R$.
Non-negativity of a Legendrian isotopy of such graphs means simply
that, for the corresponding functions $f_t$ on $L$, the value $f_t(q)$
is a non-decreasing function of~$t$ for every~$q\in L$.
In this situation, it is easy to produce the required contact
extension supported in the neighbourhood~$U$. Indeed, suppose
that $f_t:L\to\R$ are functions such that $f_0\equiv 0$ and $\dot{f}_t\ge 0$.
The contact isotopy of $\Jet^1(L)$ given by
\begin{equation}
\label{JetShift}
\xi\longmapsto \xi + j^1 f_t(q)
\quad\mbox{ for } \xi\in \Jet^1_q(L), q\in L,
\end{equation}
is non-negative and extends the Legendrian isotopy of the graphs.
A compactly supported contact isotopy of $U\cong U'\subset\Jet^1(L)$
with the same properties is obtained by multiplying the contact Hamiltonian
of the  isotopy~\eqref{JetShift} by a non-negative cut-off function
$\psi: U'\to\R$ that is equal to~$1$ on a slightly smaller neighbourhood
of the zero section.

Now let us choose a subdivision $0=t_0<t_1<...<t_{k-1}<t_k=1$ of
the segment $[0,1]$ such that $L_t$, $t\in [t_j,t_{j+1}]$,
are suffciently close to $L_{t_j}$ in the sense of the preceding
paragraph. Then for each $j=0,...,k-1$ we obtain a compactly supported
non-negative contact isotopy $\{\phi_{j,t}\}_{t\in [t_j,t_{j+1}]}$
such that $\phi_{j,t_j}=\id_X$ and $\phi_{j,t}(L_{t_j})=L_{t}$.
The required isotopy $\{\phi_t\}_{t\in [0,1]}$ is a smoothened
concatenation of the isotopies $\{\phi_{j,t}\}$.
Namely, let $\chi:[0,1]\twoheadrightarrow [0,1]$ be any non-decreasing
smooth function such that
\begin{itemize}
\item[1)]
$\chi(t_j)=t_j$ for all $j=0,...,k$;
\item[2)]
$\{t\mid\chi'(t)=0\}=\{t\mid \chi^{(n)}(t)=0\mbox{ for all } n\in{\mathbb N}\}=\{t_1,...,t_{k-1}\}$.
\end{itemize}
Set $\widetilde{\phi}_{j,t}:=\phi_{j,\chi(t)}$
and define
$$
\phi_t:=
\widetilde{\phi}_{j,t}\circ\widetilde{\phi}_{j-1,t_{j}}\circ\cdots\circ\widetilde{\phi}_{0,t_1}
$$
for $t\in [t_j,t_{j+1}]$.
\end{proof}

\begin{rem}
\label{ChiDer}
Note for future use that the derivative of $\chi$ does not vanish at~$t_0=0$.
\end{rem}

\subsection{Non-negative and positive Legendrian loops}
A Legendrian loop is a family of Legendrian submanifolds over the circle.
It is often convenient to view loops as Legendrian isotopies $\{L_t\}_{t\in [0,1]}$
such that $L_0=L_1$ and the gluing at the endpoints is smooth. If the circle
is oriented, non-negative and positive Legendrian loops are defined
in the obvious way.

\begin{lem}
\label{SomewherePos}
If a non-negative Legendrian loop $\{L_\theta\}_{\theta\in S^1}$ is positive
on some non-empty interval~$I\subset S^1$, then it can be $C^\infty$-approximated
by a positive Legendrian loop.
\end{lem}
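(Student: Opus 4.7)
The plan is to perturb the loop by flowing each Legendrian $L_\theta$ along (a cutoff of) the Reeb flow by a small $\theta$-dependent amount $\epsilon f(\theta)$, where $f\colon S^1\to\R$ is smooth and chosen so that $f'(\theta)>0$ wherever the original loop's velocity can vanish. Since $\int_{S^1}f'=0$, $f'$ must be negative somewhere; this is arranged to happen inside the interval $I$, where the original strict positivity of the loop provides a uniform lower bound that absorbs the backward push for sufficiently small~$\epsilon$.

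Concretely, I would fix a parametrization $F\colon L_0\times S^1\to X$, set $h(x,\theta):=(F^*\alpha)(\partial_\theta)\ge 0$ (with $h>0$ on $L_0\times I$), and pick a contact form $\alpha$ together with a contact Hamiltonian $K$ on $X$ that is compactly supported and identically~$1$ on a neighborhood $U$ of the compact set $\bigcup_\theta L_\theta$. The contact flow $g_s$ generated by $K$ is then globally defined, and wherever it stays inside $U$ it coincides with the Reeb flow, hence $g_s^*\alpha=\alpha$ along such trajectories. Next I would choose a closed subinterval $J\subset I$ of positive length and a smooth $f\colon S^1\to\R$ with $f'>0$ on $S^1\setminus J$ (a routine bump-function construction; necessarily $\int_J f'<0$), and set
\[
\tilde F(x,\theta):=g_{\epsilon f(\theta)}\bigl(F(x,\theta)\bigr)
\]
for small $\epsilon>0$. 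Since $f(0)=f(1)$ and each $g_s$ is a contactomorphism, $\tilde F$ is a smooth Legendrian loop that is $C^\infty$-close to the original as $\epsilon\to 0$.

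The key sign computation is that, for $\epsilon$ small enough so every trajectory $s\mapsto g_s(F(x,\theta))$ with $|s|\le\epsilon\max|f|$ stays inside~$U$, one has
\[
(\tilde F^*\alpha)(\partial_\theta)\;=\;\epsilon f'(\theta)+h(x,\theta).
\]
On $L_0\times(S^1\setminus J)$ the first summand is strictly positive and the second nonnegative, so the sum is positive; on the compact set $L_0\times J$ the function $h$ is bounded below by some $c>0$ while $|\epsilon f'|$ is uniformly bounded, so the sum is again positive for $\epsilon$ small. The main point (and, I think, the only non-routine step) is the realization that a $\theta$-dependent Reeb shift contributes exactly the extra term $\epsilon f'(\theta)$ to the velocity, which lets us trade zeros of $h$ on $S^1\setminus I$ for backward shifts inside $I$ paid for by the loop's strict positivity there; once this is in hand the rest of the argument is a matter of compactness and bookkeeping.
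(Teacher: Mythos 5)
Your proof is correct, but it takes a genuinely different route from the paper. The paper localizes: it picks a small subinterval $[\theta_1,\theta_2]\subset I$ over which the $L_\theta$ are $1$-jet graphs in $\Jet^1(L_{\theta_2})$, observes that strict positivity makes $L_{\theta_1}$ and $L_{\theta_2}$ disjoint, invokes the earlier result \cite[Lemma~2.2]{ChNe2} to $C^\infty$-approximate the non-negative isotopy over $S^1-(\theta_1,\theta_2)$ (which now has disjoint ends) by a positive one with the same ends, and then re-interpolates inside the jet chart by a strictly increasing family of functions. You instead give a direct global construction: compose the loop with a $\theta$-dependent (cut-off) Reeb shift $g_{\epsilon f(\theta)}$, and your key identity $(\tilde F^*\alpha)(\partial_\theta)=\epsilon f'(\theta)+h(x,\theta)$ is correct (on $U$ one has $\alpha(X_K)=K=1$ and $\mathcal L_{X_K}\alpha=0$, so $g_s^*\alpha=\alpha$ along trajectories staying in $U$), and the sign bookkeeping — $\epsilon f'>0$ off $J$ where only $h\ge 0$ is known, and $h\ge c>0$ on the compact $L_0\times J\subset L_0\times I$ absorbing $\epsilon\sup_J|f'|$ — goes through for small $\epsilon$. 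Your argument is self-contained (no appeal to \cite[Lemma~2.2]{ChNe2} and no front/jet-graph localization), at the price of the cutoff-and-flow bookkeeping; the paper's argument is shorter given the cited lemma. One small point: you fix a parametrization $F:L_0\times S^1\to X$, while a Legendrian loop in the paper's sense may be parametrized by a nontrivial bundle over $S^1$ (a mapping torus). This is harmless: your construction is really $\tilde L_\theta:=g_{\epsilon f(\theta)}(L_\theta)$, defined directly on the submanifolds, and since $\alpha$ vanishes on vectors tangent to $L_\theta$ the quantity $h$ and your velocity formula are independent of the choice of (local) parametrization; it would be cleaner to phrase it that way.
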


\begin{proof}
Let $[\theta_1,\theta_2]\subset I$ be an interval that is small enough
so that $L_\theta$, $\theta\in [\theta_1,\theta_2]$, can be represented
as graphs of $1$-jets of functions $f_\theta$ in $\Jet^1(L_{\theta_2})$.
The positivity of the loop on $I$ implies that this family of functions
is pointwise {\it strictly\/} increasing. In particular, the submanifolds
$L_{\theta_1}$ and $L_{\theta_2}$ are disjoint.

The restriction of our loop to $S^1-(\theta_1,\theta_2)$ is a non-negative
Legendrian isotopy with {\it disjoint\/} ends. By~\cite[Lemma~2.2]{ChNe2}
it can be $C^\infty$-approximated by a positive Legendrian isotopy
$\{\widetilde{L}_\theta\}_{\theta\in S^1-(\theta_1,\theta_2)}$ with the same ends.

It remains to interpolate between $f_{\theta_1}<0$ and $f_{\theta_2}\equiv 0$
by a new strictly increasing family of functions on $L_{\theta_2}$ so that
the union of the corresponding Legendrian isotopy over $[\theta_1,\theta_2]$
with $\{\widetilde L_\theta\}_{\theta\in S^1-(\theta_1,\theta_2)}$ is a smooth positive loop.
\end{proof}

\begin{prop}
\label{Nonneg2Pos}
If a Legendrian isotopy class contains a non-constant non-negative
{\rm (}contractible\/{\rm )} Legendrian loop,
then it contains a positive {\rm (}contractible\/{\rm )} Legendrian loop.
\end{prop}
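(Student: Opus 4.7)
The plan is to modify the given non-constant non-negative loop, within its Legendrian isotopy class (and homotopy class in the contractible case), into a non-negative loop that is strictly positive on some non-empty subinterval of $S^1$; Lemma~\ref{SomewherePos} will then $C^\infty$-approximate it by a positive Legendrian loop, with contractibility preserved by the smallness of the perturbation.

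Non-constancy combined with non-negativity supplies a parameter value $\theta^\ast\in S^1$ at which the loop has strictly positive motion at some point $q^\ast$. In a Darboux--Weinstein neighborhood of $L_{\theta^\ast}$, the nearby Legendrians $L_\theta$ are graphs of $1$-jets of functions $f_\theta:L_{\theta^\ast}\to\R$ with $\partial_\theta f_\theta\ge 0$, and $\partial_\theta f_\theta>0$ at $(q^\ast,\theta^\ast)$. Pick $\theta_1<\theta^\ast<\theta_2$ close enough that this representation is valid on $[\theta_1,\theta_2]$; then $f_{\theta_1}<f_{\theta_2}$ at some point of $L_{\theta^\ast}$ but not necessarily everywhere, so $L_{\theta_1}$ and $L_{\theta_2}$ may still meet.

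The central step is to modify the loop so that $L_{\theta_1}$ and $L_{\theta_2}$ become disjoint. Locally, replace the family on $[\theta_1,\theta_2]$ by a pointwise non-decreasing family $\tilde f_\theta$ with $\tilde f_{\theta_1}=f_{\theta_1}$ and $\tilde f_{\theta_2}:=f_{\theta_2}+\varepsilon$ strictly greater than $f_{\theta_1}$ everywhere, for small $\varepsilon>0$; the new endpoint $L'_{\theta_2}$ is then disjoint from $L_{\theta_1}$. This local change has to be matched on the complement $S^1\setminus[\theta_1,\theta_2]$ by a further non-negative perturbation that absorbs the $\varepsilon$-shift, exploiting the strictly positive motion present on an open subset of $L_{\theta^\ast}\times[\theta_1,\theta_2]$ by continuity from $(q^\ast,\theta^\ast)$; the whole modification is arbitrarily small and stays in the original Legendrian isotopy and homotopy class. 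With disjoint ends in place, the restriction of the modified loop to $S^1\setminus(\theta_1,\theta_2)$ is a non-negative Legendrian isotopy with disjoint ends, so \cite[Lemma~2.2]{ChNe2} $C^\infty$-approximates it by a positive Legendrian isotopy with the same ends; gluing back the unchanged (non-negative) portion on $[\theta_1,\theta_2]$ produces a non-negative loop that is strictly positive on the approximated arc, and Lemma~\ref{SomewherePos} completes the argument.

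The main obstacle is this matching step: extending the local disjointness-producing modification to a global modification of the loop that preserves both non-negativity and the periodicity needed for closing up. A direct pointwise shift by $\varepsilon$ would accumulate around $S^1$ and break closedness, so the shift has to be undone elsewhere; undoing it non-negatively requires exploiting existing strictly positive motion, which is precisely what non-constancy delivers.
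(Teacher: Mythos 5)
There is a genuine gap, and it is exactly the step you yourself flag as ``the main obstacle'': the matching/absorption of the $\varepsilon$-shift. Your only source of strict positivity is the open set around $(q^\ast,\theta^\ast)$ supplied by non-constancy, i.e.\ strictly positive motion of the points of $L$ \emph{near $q^\ast$} during a short parameter interval. But the disjointness you need requires creating a gap between $L_{\theta_1}$ and (a modified) $L_{\theta_2}$ at \emph{every} point of the Legendrian, including points $q$ where $f_{\theta_1}(q)=f_{\theta_2}(q)$ and where the loop may in fact be motionless for all $\theta$. Shifting $f_{\theta_2}$ up by $\varepsilon$ there cannot be ``undone'' or ``absorbed'' later non-negatively: undoing an upward shift means moving down, and pre-absorbing it would require strictly positive motion of \emph{all} points somewhere along the loop --- which is essentially the conclusion you are trying to prove (once the loop is positive, as an isotopy of the whole submanifold, on some subinterval, Lemma~\ref{SomewherePos} finishes immediately, so your scheme is circular at the crucial point). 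The localized positivity near $q^\ast$ gives you no control whatsoever at points far from $q^\ast$. Note also that your plan, if it worked, would produce a positive loop in the homotopy class of the \emph{original} loop after an arbitrarily small perturbation --- a strictly stronger statement than the proposition, and one the known techniques do not give.

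The paper's proof supplies precisely the mechanism you are missing. It first extends the non-negative Legendrian isotopy to an ambient compactly supported non-negative contact isotopy $\phi_t$ (Proposition~\ref{ExtNonNeg}, with Remark~\ref{ChiDer} guaranteeing the strict positivity at $x_0$ survives at $t=0$), and then uses the Eliashberg--Polterovich conjugation trick: choose contactomorphisms $\Psi_j$ preserving $L_0$, isotopic to the identity through such, whose translates $\Psi_j(U)$ of the positivity neighbourhood cover $L_0$, and form the composition $\widetilde\phi_t=\phi_t\circ\psi_k\circ\phi_t\circ\cdots\circ\psi_1\circ\phi_t$. This spreads the pointwise positivity over all of $L_0$ at $t=0$, so the resulting non-negative loop is positive on a whole time interval and Lemma~\ref{SomewherePos} applies. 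The price is that the resulting positive loop is homotopic to the $(k+1)$-fold iterate of the original loop, not to the loop itself --- which is harmless for the proposition since an iterate of a contractible loop is contractible, and which is the structural reason why an argument like yours, confined to a small perturbation of the single original loop, cannot be pushed through.
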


\begin{proof}
A non-constant non-negative Legendrian loop in $(X,\ker\alpha)$ defines a non-negative
Legendrian isotopy $\iota_t:L_0\to L_t$, $t\in [0,1]$, such that $\iota_1(L_0)=L_0$ and
$$
\alpha\left({\tfrac{d}{dt}\iota_t|}_{t=0}(x_0)\right) >0
$$
for some $x_0\in L_0$. By Proposition~\ref{ExtNonNeg} and Remark~\ref{ChiDer},
there exists a compactly supported non-negative contact isotopy $\{\phi_t\}_{t\in[0,1]}$
such that $\phi_0\equiv\id_X$, $\phi_t(L_0)=L_{\chi(t)}$, and
$$
\alpha\left(
{\tfrac{d}{dt}\phi_t|}_{t=0}(x_0)\right)=
\chi'(0)\alpha\left({\tfrac{d}{dt}\iota_t|}_{t=0}(x_0)\right) >0.
$$
Let $U\subseteq L_0$ be a neighbourhood of $x_0$ on which this
strict inequality continues to hold.

Since $L_0$ is compact and connected, there exist contactomorphisms
$\Psi_j\in\mathrm{Cont}_0(X,\ker\alpha)$, $j=0,...,k$,
such that
\begin{itemize}
\item[1)]
$\Psi_0=\id_X$;
\item[2)]
$\Psi_j(L_0)=L_0$ for all $j$;
\item[3)]
$\Psi_j$ is isotopic to $\id_X$ within the class of contactomorphisms
preserving~$L_0$;
\item[4)]
$\bigcup\limits_{j=0}^{k} \Psi_j(U)=L_0$.
\end{itemize}
To construct $\Psi_j$, note first that there exist diffeomorphisms of $L_0$
isotopic to $\id_{L_0}$ and having property~(4). These diffeomorphisms
extend to contactomorphisms of $X$ with the required properties
by the Legendrian isotopy extension theorem.

Let $\psi_j:=(\Psi_j)^{-1}\circ \Psi_{j-1}$ for $j=1,...,k$ so that $(\Psi_j)^{-1}=\psi_j\circ\cdots\circ\psi_1$
for each $j=1,...,k$. Consider the contact isotopy
\begin{equation}
\label{EPformula}
\widetilde\phi_t:= \phi_t\circ\psi_k\circ\phi_t\circ\cdots\circ\psi_1\circ\phi_t,
\quad t\in[0,1].
\end{equation}
By property (4) of $\Psi_j$ and the choice of $U$, we have
$$
\alpha\left({\tfrac{d}{dt}\widetilde{\phi}_t|}_{t=0}(x)\right)>0
$$
for all $x\in L_0$. Hence, the Legendrian isotopy
$$
\widetilde L_t:= \widetilde\phi_t(L_0)
$$
is positive on some interval $[0,\epsilon)$, $\epsilon>0$.
Since $\widetilde L_0=\widetilde L_1$ by property (2),
smoothing $\{\widetilde L_t\}$ at $0$ and $1$ gives us
a non-negative Legendrian loop that is positive on
a slightly smaller interval $(\epsilon',\epsilon)$, $\epsilon>\epsilon'>0$.
By Lemma~\ref{SomewherePos}, this loop can be approximated
by an everywhere positive loop.

Finally, it follows from formula~\eqref{EPformula} and property (3) of $\Psi_j$ that
the obtained positive loop is homotopic to the $(k+1)$-fold iteration of the original
Legendrian loop. In particular, starting with a contractible non-constant
non-negative Legendrian loop, we get a contractible positive one.
\end{proof}

\begin{rem}
The proof of the proposition is a modification of the second
step in the proof of~\cite[Proposition~2.1.B]{ElPo}.
\end{rem}

\subsection{Partial orders on Legendrian isotopy classes and their universal coverings}
\label{PartOrders}
Let $L$ be a Legendrian submanifold in a contact manifold $(X,\ker\alpha)$.
Denote by $\Leg(L)$ the Legendrian isotopy class of $L$, i.e.\ the space of all
Legendrian submanifolds Legendrian isotopic to $L$ with $C^\infty$-topology,
and let $\Pi:\widetilde{\Leg}(L)\to\Leg(L)$ be the universal
covering of this space.

For $L_1,L_2\in\Leg(L)$, write $L_1\lle L_2$ if there is a non-negative
Legendrian isotopy connecting $L_1$ to~$L_2$. This partial relation admits
a natural lift to $\widetilde{\Leg}(L)$. For $\ell_1,\ell_2\in\widetilde{\Leg}(L)$,
write $\ell_1\ulle \ell_2$ if there exists a path $\gamma\subset \widetilde{\Leg}(L)$
connecting $\ell_1$ to~$\ell_2$ such that $\Pi(\gamma)$ is a non-negative Legendrian
isotopy.

It is clear that $\lle$ and $\ulle$ are reflexive and transitive. The Legendrian isotopy
class $\Leg(L)$ is called {\it orderable\/}
if $\lle$ is a partial order on it, i.e.\ if $\lle$ is also antisymmetric:
$$
L_1\lle L_2\mbox{ and } L_2\lle L_1 \Longrightarrow L_1=L_2.
$$
The class $\Leg(L)$ is called {\it universally orderable\/}
if $\ulle$ is a partial order on $\widetilde{\Leg}(L)$.

\begin{prop}
\label{Order}
A Legendrian isotopy class is orderable if and only if it does not
contain a positive Legendrian loop and universally orderable if
and only if it does not contain a contractible positive Legendrian
loop.
\end{prop}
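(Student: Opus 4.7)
The plan is to treat both equivalences by the same strategy. In each case, the harder direction extracts from a violation of antisymmetry a \emph{non-constant} non-negative Legendrian loop, to which Proposition~\ref{Nonneg2Pos} is then applied to produce a positive loop; the easier direction exhibits a violation of antisymmetry directly from a positive loop by passing through an intermediate point.

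For the orderability statement, I would begin by supposing that $\lle$ fails to be antisymmetric, picking distinct $L_1,L_2\in\Leg(L)$ with $L_1\lle L_2$ and $L_2\lle L_1$. Concatenating (and smoothing at the join) the two witnessing non-negative Legendrian isotopies will produce a non-negative Legendrian loop based at $L_1$ that passes through $L_2\ne L_1$, hence is non-constant; Proposition~\ref{Nonneg2Pos} then delivers a positive Legendrian loop in $\Leg(L)$. For the converse, given a positive Legendrian loop $\{L_t\}_{t\in[0,1]}$, I would first note that it must be non-constant because otherwise the parametrisation would have every $\dot\iota_t(x)$ tangent to $L_0$ and thus annihilated by $\alpha$, contradicting positivity; then, choosing $t_0\in(0,1)$ with $L_{t_0}\ne L_0$, the restrictions of the loop to $[0,t_0]$ and $[t_0,1]$ are non-negative isotopies giving $L_0\lle L_{t_0}\lle L_0$, which breaks antisymmetry.

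The universal orderability part is the same argument transported through $\Pi\colon\widetilde{\Leg}(L)\to\Leg(L)$. Given $\ell_1\ne\ell_2$ in $\widetilde{\Leg}(L)$ with $\ell_1\ulle\ell_2$ and $\ell_2\ulle\ell_1$, the concatenation of the two witnessing paths is a closed loop in $\widetilde{\Leg}(L)$ based at $\ell_1$; its projection to $\Leg(L)$ will be a non-negative Legendrian loop, contractible because it lifts to a closed loop and non-constant because constancy would force the lift to be constant (contradicting $\ell_1\ne\ell_2$), so Proposition~\ref{Nonneg2Pos} yields a \emph{contractible} positive Legendrian loop. Conversely, a contractible positive loop $\{L_t\}$ lifts to a closed path $\{\ell_t\}$ in $\widetilde{\Leg}(L)$ (contractible loops are precisely those with closed lifts), and picking $t_0$ with $L_{t_0}\ne L_0$ gives $\ell_{t_0}\ne\ell_0=\ell_1$; the two halves of the lifted path then show $\ell_0\ulle\ell_{t_0}\ulle\ell_0$, violating antisymmetry of $\ulle$.

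The substantive content is imported wholesale from Proposition~\ref{Nonneg2Pos}, so everything else is bookkeeping. The only mildly delicate point will be smoothing the concatenated loop: one must check that the piecewise-smooth concatenation of two non-negative Legendrian isotopies can be reparametrised to a smooth non-negative Legendrian loop while remaining non-constant, which is a routine reparametrisation (make each half stationary near the join).
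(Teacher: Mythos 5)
Your proposal is correct and follows the same route as the paper: the easy direction exhibits a failure of antisymmetry from a (contractible) positive loop via an intermediate point, and the hard direction concatenates the two non-negative isotopies into a non-constant (contractible) non-negative loop and invokes Proposition~\ref{Nonneg2Pos}. The paper simply compresses the bookkeeping you spell out (smoothing the concatenation, non-constancy, and the lifting arguments in the universal cover) into the word ``obvious.''
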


\begin{proof}
The `only if' part is obvious from the definitions. It is equally
obvious that a class is (universally) orderable if it does not
contain a (contractible) non-constant non-negative Legendrian loop.
The result follows now from Proposition~\ref{Nonneg2Pos}.
\end{proof}

\begin{rem}
This proposition is a Legendrian version of~\cite[Criterion~1.2.C]{ElPo}.
\end{rem}

The following orderability result was obtained in~\cite{ChNe2}
in the case when $V$ is a point and is a consequence of~\cite[Corollary~4.14]{GuiKaScha}
and Proposition~\ref{ExtNonNeg} in the general case.

\begin{thm}
\label{OldOrder}
Let $M$ be a manifold with non-compact universal covering and $V\subset M$
a simply connected closed submanifold of codimension~$\ge 2$.
Then the Legendrian isotopy class $\Leg(SN^*V)$ of the spherical
conormal bundle of $V$ is orderable.
\end{thm}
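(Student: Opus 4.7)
By Proposition \ref{Order}, the Legendrian isotopy class $\Leg(SN^*V)$ is orderable if and only if it contains no positive Legendrian loop. The plan is to argue the contrapositive: assuming such a loop exists, derive a contradiction with \cite[Corollary~4.14]{GuiKaScha} by passing from the Legendrian setting to a contact-isotopy setting via Proposition \ref{ExtNonNeg}.

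Concretely, suppose $\{L_t\}_{t\in[0,1]}$ is a positive Legendrian loop with $L_0=L_1=SN^*V$. First I would invoke Proposition~\ref{ExtNonNeg} to obtain a compactly supported non-negative contact isotopy $\{\phi_t\}_{t\in[0,1]}$ of $ST^*M$ with $\phi_0=\id_{ST^*M}$ and $\phi_t(SN^*V)=L_{\chi(t)}$ for a non-decreasing surjection $\chi:[0,1]\twoheadrightarrow[0,1]$; in particular $\phi_1(SN^*V)=SN^*V$. By Remark~\ref{ChiDer}, $\chi'(0)>0$, so combined with the strict positivity of $\{L_t\}$ at $t=0$, the contact Hamiltonian of $\{\phi_t\}$ is strictly positive at some point of $SN^*V$ for small~$t$. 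The second step is to invoke the sheaf-theoretic result \cite[Corollary~4.14]{GuiKaScha}, whose hypotheses are tailored to precisely our situation (simply connected closed $V\subset M$ of codimension $\ge 2$ in a manifold with non-compact universal cover), and which rules out the existence of a compactly supported non-negative contact isotopy that returns $SN^*V$ to itself with somewhere-strictly-positive Hamiltonian on $SN^*V$. The resulting contradiction establishes orderability.

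The main obstacle is the correct invocation of \cite[Corollary~4.14]{GuiKaScha}: one must verify that the microlocal non-displaceability statement obtained there indeed covers the compact Legendrian $SN^*V$ under our hypotheses on $(M,V)$. This very likely involves passing to the universal cover $\widetilde M$, where simple-connectedness of $V$ ensures that $SN^*V$ lifts to a disjoint union of copies of $SN^*\widetilde V$ in $ST^*\widetilde M$ indexed by $\pi_1(M)$, and where non-compactness of $\widetilde M$ is the key ingredient making the sheaf-quantisation argument effective. The codimension~$\ge 2$ assumption presumably enters in ensuring that $SN^*V$ is a (connected) closed Legendrian of the right type for the GKS framework. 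The Legendrian-to-contact bridge provided by Proposition~\ref{ExtNonNeg} is routine but essential, since \cite[Corollary~4.14]{GuiKaScha} is formulated for contact isotopies rather than for Legendrian loops directly.
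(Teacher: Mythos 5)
Your proposal is essentially the paper's own argument: the paper establishes Theorem~\ref{OldOrder} precisely by combining \cite[Corollary~4.14]{GuiKaScha} with the Legendrian-to-contact bridge of Proposition~\ref{ExtNonNeg} (the reduction of orderability to excluding positive loops via Proposition~\ref{Order} being implicit), and it gives no more detail on verifying the hypotheses of the GKS statement than you do. So the approach is the same, and the step you flag as the main obstacle is exactly what the paper delegates to the citation.
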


However, $\Leg(SN^*V)$ is not orderable in general.
For instance, the condition that the universal covering of $M$
is non-compact is necessary if $\dim M\le 3$, see~\cite[Example~8.3]{ChNe2}.
The assumption that $V$ is simply connected can not be removed either,
as shown by the example of $V=S^1\times\{\mathrm{pt}\}\subset S^1\times S^2=M$.
Nevertheless, orderability can be restored by passing to the universal
covering of the Legendrian isotopy class.

\begin{thm}
\label{UnivOrder}
The Legendrian isotopy class $\Leg(SN^*V)$ of the spherical
conormal bundle of a connected closed submanifold $V\subset M$
of codimension~$\ge 2$ is {\bf\em universally} orderable.
\end{thm}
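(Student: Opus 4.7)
The plan is to deduce this as a direct consequence of Theorem~\ref{NoContrTrans} via the characterization of universal orderability in Proposition~\ref{Order}.

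First, by Proposition~\ref{Order}, $\Leg(SN^*V)$ is universally orderable if and only if it contains no contractible positive Legendrian loop. By the example on transverse families and positive isotopies in~\S 3.1, a positive Legendrian loop is exactly a transverse family of Legendrian submanifolds over the closed base $B=S^1$, and a Legendrian loop is contractible in $\Leg(SN^*V)$ precisely when it is a contractible family in the sense of~\S 3.4. Hence it suffices to show that $\Leg(SN^*V)$ contains no contractible transverse family over $S^1$.

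Next, I would invoke Example~\ref{Conormal}: upon choosing an auxiliary Riemannian metric on $M$ and a function $f:M\to\R$ whose negative sublevel set $\{f<0\}$ is a geodesic tube around~$V$, the co-geodesic flow produces a Legendrian isotopy from $SN^*V$ to $L_f$, so that $\Leg(SN^*V)=\Leg(L_f)$. Theorem~\ref{NoContrTrans}, applied to this Legendrian isotopy class with the closed base $B=S^1$, then gives exactly the non-existence statement we need, and we conclude.

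The main obstacle has in fact already been dealt with upstream: the entire analytic content lies in the Legendrian non-displacement result of Theorem~\ref{NonDispl}, whose proof combines the existence of a quadratic at infinity generating hypersurface from Theorem~\ref{GenHyp} with a $\Z/2$-cohomology/degree comparison of the fibres of the projections induced by $H_0$ and~$H_1$. Once that is in place, the present theorem is essentially a bookkeeping corollary: the remaining items to verify --- the equivalence between contractible positive Legendrian loops and contractible transverse families over $S^1$, and the identification $SN^*V\sim L_f$ --- are both immediate from the cited results.
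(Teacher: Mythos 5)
Your proposal follows the paper's own route exactly: Proposition~\ref{Order} reduces universal orderability to the non-existence of a contractible positive Legendrian loop, such a loop is a contractible transverse family over $S^1$, Example~\ref{Conormal} identifies $\Leg(SN^*V)$ with $\Leg(L_f)$, and Theorem~\ref{NoContrTrans} (whose content indeed rests on the non-displacement Theorem~\ref{NonDispl}) rules the loop out.

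There is, however, one step you skip that is genuinely needed. Theorem~\ref{NoContrTrans} (and Theorem~\ref{NonDispl} behind it) is stated only for a \emph{closed} manifold $M$, whereas in the statement of Theorem~\ref{UnivOrder} only the submanifold $V$ is assumed closed; $M$ itself may be open. As written, you apply Theorem~\ref{NoContrTrans} to a possibly non-compact $M$, where it is not available (its proof uses compactness of the base manifolds, e.g.\ in the Ehresmann and Poincar\'e-duality arguments of Theorem~\ref{NonDispl}). The paper bridges this with a short compactness reduction: a contractible positive loop, its contracting homotopy, and the Legendrian isotopy from $SN^*V$ to the base point of the loop are all compact, hence live over a compact subset of $M$; one may therefore replace $M$ by a closed manifold agreeing with it near that subset and near $V$ (for instance the double of a compact codimension-zero piece containing everything), and only then invoke Example~\ref{Conormal} and Theorem~\ref{NoContrTrans}. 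With that reduction inserted, your argument coincides with the paper's proof.
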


\begin{proof}
Suppose that $\Leg(SN^*V)$ is not universally orderable.
Then it contains a contractible positive Legendrian loop by Proposition~\ref{Order}.
Since this loop and its homotopy to a constant loop are compact,
we may assume that $M$ is a closed manifold. Recall now that $SN^*V$
is Legendrian isotopic to $L_f$ for a suitable function $f:M\to\R$,
see Example~\ref{Conormal}. Hence, the existence of
a contractible positive Legendrian loop (i.e.\ of a
contractible transverse family over the circle) in $\Leg(SN^*V)$
contradicts Theorem~\ref{NoContrTrans}.
\end{proof}

\begin{rem}
The argument in the proof of Theorem~\ref{UnivOrder} shows that
$\Leg(L_f)$ is universally orderable for every function $f:M\to \R$
on a closed manifold such that $0$ is not a critical value and
the hypersurface $\{f=0\}$ is connected.
\end{rem}

\end{document}